\documentclass[11pt]{article}

% variables that are 0 or 1
\def\TEXPAD{0}

%% PACKAGES
\usepackage{amsmath,amssymb,amsthm}
\usepackage[mathscr]{eucal} % for \mathscr
\usepackage{cite} % to put citations in numerical order
\usepackage{hyperref}
\usepackage[capitalize, nameinlink]{cleveref}
\usepackage{enumitem}
\usepackage[usenames,dvipsnames]{xcolor} % for e.g. \todo color
\usepackage{tikz}
\usepackage[font=small,labelfont=bf, margin=.5in]{caption}

\colorlet{prettygreen}{ForestGreen!60!LimeGreen}

% Tikz
\usetikzlibrary{calc, arrows.meta}
\tikzset{vtx/.style={circle, fill, inner sep=1.5pt}}
\tikzset{openvtx/.style={circle, draw, inner sep=1.5pt}}

%% THEOREMS
\newtheorem{theorem}{Theorem}[section]
\newtheorem{lemma}[theorem]{Lemma}
\newtheorem{proposition}[theorem]{Proposition}

\newtheorem{claim}[theorem]{Claim}
\newtheorem*{claim*}{Claim}

\theoremstyle{definition}
\newtheorem{definition}[theorem]{Definition}

\newtheorem{question}[theorem]{Question}
\newtheorem{conjecture}[theorem]{Conjecture}

\theoremstyle{remark}

\newenvironment{prop}{\begin{proposition}}{\end{proposition}}
\newenvironment{dfn}{\begin{definition}}{\end{definition}}

\if\TEXPAD0
	\crefname{claim}{Claim}{Claims}
\fi

%% MARGINS
\textheight 8.75in
\topmargin -0.5in
\oddsidemargin 0.125in
\textwidth 6.25in

%% =========
%% FUNCTIONS
%% =========

\newcommand{\mydots}[1]{\,\overset{_{#1}}{\cdots}\,}

\newcommand{\al}{\alpha}
\newcommand{\eps}{\epsilon}
\newcommand{\E}{{\mathbb E}}
\newcommand{\A}{{\mathcal A}}
\newcommand{\DD}{{\mathcal D}}
\newcommand{\FF}{{\mathcal F}}
\newcommand{\HH}{{\mathcal H}}
\renewcommand{\SS}{{\mathcal S}}
\newcommand{\RP}{{\mathbb R\mathbb P}^2}
\newcommand{\Vo}{V^\circ}

\newcommand{\disk}{\mathbb D^2}
\newcommand{\diskm}{\mathbb D^{2-}}
\newcommand{\sph}{\mathbb S^2}
\newcommand{\sone}{\mathbb S^1}
\newcommand{\svs}{\sone\vee\sone}
\newcommand{\tor}{\mathbb T^2}

\DeclareMathOperator{\ex}{ex}
\newcommand{\X}{X}
\newcommand{\exh}{\ex_{\hom}}

\newcommand{\kptz}{Kupavskii, Polyanskii, Tomon, and Zakharov}

%% TITLE/AUTHOR/ETC
\title{The Tur\'an Number of Surfaces}
\author{Maya Sankar}
\date{October 20, 2022}
\makeatletter
\makeatother

%% END PREAMBLE

\begin{document}
\maketitle

\begin{abstract}
We show that there is a constant $c$ such that any 3-uniform hypergraph $\HH$ with $n$ vertices and at least $cn^{5/2}$ edges contains a triangulation of the real projective plane as a subgraph. This resolves a conjecture of \kptz. Furthermore, our work, combined with prior results, asymptotically determines the Tur\'an number of all surfaces.
\end{abstract}

\section{Introduction}

Tur\'an-type questions are fundamental in the study of extremal combinatorics. Given a fixed $r$-uniform hypergraph $\FF$, its \emph{Tur\'an number} $\ex(n,\FF)$ is the maximum number of edges in an $r$-uniform hypergraph $\HH$ on $n$ vertices which does not contain $\FF$ as a subgraph. Estimating Tur\'an numbers for hypergraphs remains a largely open problem; we refer the reader to the surveys \cite{Ke11,MuPiSu11,FuSi13} for a general overview.

This paper investigates a topological variant of this extremal problem. Any $r$-uniform hypergraph $\HH$ may be viewed as an $(r-1)$-dimensional simplicial complex whose facets are the edges of $\HH$. Similarly, one may ask if any subgraph of $\HH$ is homeomorphic to a given $(r-1)$-dimensional simplicial complex $X$. This geometric perspective yields many natural generalizations of graph properties to higher dimensions. For example, one analogue of Hamiltonian cycles in 3-uniform hypergraphs that has received some attention (see \cite{GHMN22,LuTe19}) is a spanning sub-hypergraph homeomorphic to the 2-sphere. Additionally, one is naturally interested in the following extremal quantity. Let $X$ be a closed $(r-1)$-dimensional manifold. Denote by $\exh(n,X)$ the maximum number of edges in a $r$-uniform hypergraph $\HH$ on $n$ vertices such that no subgraph of $\HH$ is homeomorphic (as a simplicial complex) to $X$. This is the \emph{Tur\'an number} of the topological space $X$.

As part of his program in high-dimensional combinatorics, Linial \cite{Li18} asked for the asymptotics of $\exh(n,X)$ when $r\geq 3$. Linial's question was partially motivated by the work of S\'os, Erd\H os, and Brown \cite{SoErBr73} some decades prior, which showed that $\exh(n,X)=\Theta(n^{5/2})$ when $X$ is the 2-sphere $\sph$. Linial \cite{Li18} sketched a new proof of the lower bound $\exh(n,\sph)=\Omega(n^{5/2})$ which generalized to all closed, connected 2-manifolds $X$; this proof is given rigorously in \cite[\S 2]{KPTZ21}. We call such a 2-manifold a \emph{surface}. 

All surfaces fall into one of three categories: the sphere $\sph$, the connected sum of $g\geq 1$ tori, or the connected sum of $k\geq 1$ real projective planes. Until recently, it was unknown if the lower bound of $n^{5/2}$ was asymptotically tight for the latter two classes. Indeed, Linial \cite{Li08,Li18} repeatedly conjectured a matching upper bound for the torus $\tor$, i.e.\ that $\exh(n,\tor)=O(n^{5/2})$. \kptz \cite{KPTZ21} proved Linial's conjecture in 2020. Additionally, they showed that if two surfaces $X_1, X_2$ satisfy $\exh(n,X_i)=O(n^{5/2})$, their connected sum $X_1\# X_2$ also satisfies $\exh(n,X_1\#X_2)=O(n^{5/2})$, thereby extending the upper bound $\exh(n,X)=O(n^{5/2})$ to orientable surfaces of the form $X=\tor\#\cdots\#\tor$. They were unable to derive the corresponding result for any non-orientable surfaces, but conjectured that the same bound applies to all surfaces.

Our main result is the resolution of this conjecture. We show that Linial's lower bound is asymptotically tight for the real projective plane $\RP$.
\begin{theorem}\label{mainthm}
We have $\exh(n,\RP)=O(n^{5/2})$.
\end{theorem}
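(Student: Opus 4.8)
The plan is to build a triangulation of $\RP$ inside $\HH$ as $M\cup D$, where $M$ is a triangulated M\"obius band and $D$ a triangulated disk with $\partial D=\partial M$, the two pieces meeting only along this common boundary circle. Since removing an open disk from $\RP$ leaves a M\"obius band, any such $M\cup D$ is homeomorphic to $\RP$; and because $M$ and $D$ are honest simplicial complexes meeting in a common subcomplex, the union is automatically a simplicial subcomplex of $\HH$ that is a closed surface (each edge lies in exactly two faces). So it suffices to locate such an $M$ and $D$ in $\HH$.

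For $M$ I would use the simplest flexible non-orientable gadget: a \emph{tight cycle of odd length}. If $v_0,\dots,v_{\ell-1}$ are distinct vertices with $\{v_i,v_{i+1},v_{i+2}\}\in\HH$ for every $i\in\mathbb Z_\ell$, then for $\ell\ge 5$ these $\ell$ triples form a triangulated surface with boundary: the ``unit'' edges $\{v_i,v_{i+1}\}$ each lie in two faces, while the ``diagonal'' edges $\{v_i,v_{i+2}\}$ each lie in one and together form the cycle $v_0v_2v_4\cdots$ on the same vertex set. Orienting $\{v_i,v_{i+1},v_{i+2}\}$ by the cyclic order $(v_i,v_{i+1},v_{i+2})$ makes every pair of adjacent faces traverse their shared edge in the \emph{same} direction, so a globally consistent orientation exists precisely when one can $2$-colour the faces with adjacent faces opposite, i.e.\ when the dual cycle $C_\ell$ is bipartite, i.e.\ when $\ell$ is even. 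Combined with the count $\chi=\ell-2\ell+\ell=0$ and the fact that for odd $\ell$ the diagonal edges form a single cycle, this shows that a tight cycle of odd length $\ell\ge5$ is exactly a triangulated M\"obius band with boundary the $\ell$-cycle $v_0v_2v_4\cdots$. (The case $\ell=4$ is the boundary of a tetrahedron, a triangulated $\sph$, which does not help; $\ell=3$ is degenerate.)

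The argument would then run as follows. First clean up $\HH$: repeatedly delete any vertex whose link contains fewer than $c_1n^{3/2}$ edges, losing at most half the edges and leaving $\Omega(n^{5/2})$ edges on $\Theta(n)$ vertices, every link now a graph of average degree $\gtrsim n^{1/2}$. The dense links, fed into a supersaturation/counting argument of the type proving $\exh(n,\sph)=O(n^{5/2})$, yield tight cycles in abundance. The crux of the whole proof --- and the feature with no analogue in the $\tor$ argument of \kptz, where the handle-building gadget (an annulus, i.e.\ an even tight cycle) is orientable --- is to force a tight cycle of \emph{odd} length. This is a parity statement: one must rule out that $\HH$ is ``tight-bipartite'' in a suitable sense, or, equivalently, exhibit a local surgery changing the length of a tight cycle by one. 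I would pursue the latter: if a tight cycle runs $\cdots v_{i-1},v_i,v_{i+1},v_{i+2}\cdots$ and some vertex $u$ lies in the common link of the three pairs $\{v_{i-1},v_i\}$, $\{v_i,v_{i+1}\}$, $\{v_{i+1},v_{i+2}\}$, then splicing $u$ in between $v_i$ and $v_{i+1}$ produces a tight cycle one longer, of opposite parity. One therefore wants the cleanup strengthened so that some tight cycle has enough consecutive edges with large links to guarantee such a $u$ (or a comparable parity-switching move); this is where I expect the real work, and the main obstacle, to lie.

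With an odd tight cycle $M$ in hand, with boundary the $\ell$-cycle $C$, it remains to cap $C$ with a triangulated disk $D\subseteq\HH$ meeting $M$ only along $C$ --- so $D$ consists of $C$ together with fresh interior vertices and triangles of $\HH$ triangulating the $\ell$-gon. Since $\HH$ still has $\Omega(n^{5/2})$ edges and the vertices of $C$ retain large links, a further application of the same circle of supersaturation ideas produces such a $D$; some care is needed because the edges of $C$ are the diagonal pairs of the tight cycle and their only guaranteed triple in $\HH$ is a face of $M$, so one must either choose the tight cycle so that its diagonal pairs lie in several triples, or first attach a thin even strip to $\partial M$ (an annulus, leaving the M\"obius topology unchanged) to obtain a ``padded'' M\"obius band whose new boundary is made of fresh, high-degree edges, and then cap that. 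Gluing $D$ to $M$ along $C$ gives a triangulation of $\RP$ inside $\HH$, which proves \Cref{mainthm}. Everything except the parity step is a careful adaptation of the $\sph$ and $\tor$ arguments; forcing the non-orientable twist --- an \emph{odd} tight cycle rather than merely a tight cycle --- is the content that distinguishes $\RP$ from the orientable surfaces and resolves the \kptz\ conjecture.
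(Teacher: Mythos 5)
Your topological setup is sound: a tight cycle $v_0\cdots v_{\ell-1}$ of odd length $\ell\ge 5$ is indeed a triangulated M\"obius band (the vertex links are arcs, $\chi=0$, and the diagonal edges form a single boundary cycle), and gluing a disk onto a M\"obius band along its boundary, with no other intersection, does give $\RP$. But the proposal has two genuine gaps, both of which you flag yourself and neither of which is routine. First, the parity step. You need to rule out that a $3$-graph with $cn^{5/2}$ edges can have all its tight cycles of even length, and the splicing move you suggest requires a vertex $u$ in the common link of three \emph{consecutive} pairs of the cycle --- a triple-codegree condition that dense $3$-graphs need not satisfy locally, and that your cleanup step (which only controls link sizes of single vertices) does not provide. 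No argument is given that a ``tight-bipartite'' obstruction cannot survive at density $n^{5/2}$, and since even the existence of a single tight cycle at density $n^{2+o(1)}$ is a difficult recent theorem (not a corollary of the S\'os--Erd\H os--Brown double-pyramid argument, which produces cones over link cycles, not tight cycles), controlling the parity of tight cycles is not a ``careful adaptation'' of known arguments. Second, the capping step: the boundary of your M\"obius band consists of the diagonal pairs $\{v_i,v_{i+2}\}$, each guaranteed to lie in only one triple of $\HH$, so the disk-finding machinery (which needs every boundary edge to have large codegree, or an admissibility condition) does not apply; the ``pad with an annulus first'' fix is itself another unproven disk/annulus-finding step over a cycle with no codegree guarantees.

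For comparison, the paper avoids both problems by using a different decomposition of $\RP$: not M\"obius band $\cup$ disk, but two copies of $\diskm$ (a disk with two boundary points identified) glued along a wedge of circles $\svs$. Concretely, one finds two vertices $u,u'$ and two cycles $C,C'$ in the link graph $\HH_{u,u'}$ meeting in a single vertex $v_0$; coning $C\cup C'$ from $u$ and from $u'$ (after a local repair near $v_0$ using two small auxiliary disks) attaches two $2$-cells to $\svs$ along the loops $ab$ and $a^{-1}b$, which is $\RP$ by \cref{prop:makeRP}. The non-orientability is thus encoded entirely in the attaching maps of the two $2$-cells, so no parity of any cycle length ever needs to be controlled, and all the disks involved are cones over paths in link graphs, where the admissibility machinery of \kptz\ applies directly. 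Your plan isolates the right conceptual difficulty (forcing a non-orientable gadget), but it leaves precisely that difficulty, plus the capping, unresolved.
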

\noindent By \kptz's result on connected sums, this bound generalizes to all non-orientable surfaces $X=\RP\#\cdots\#\RP$. Combining our work with the results of S\'os, Erd\H os, and Brown \cite{SoErBr73} for the sphere and \kptz \cite{KPTZ21} for all other orientable surfaces, we completely determine the asymptotics of $\exh(n,X)$ for any surface $X$.
\begin{theorem}\label{genthm}
Let $X$ be any surface. Then $\exh(n,X)=\Theta(n^{5/2})$, where the constant coefficients may depend on the surface $X$.	
\end{theorem}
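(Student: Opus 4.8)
The plan is to deduce this from \Cref{mainthm} together with the earlier results, using the classification of closed surfaces. First I would invoke the classification theorem: every surface $X$ is homeomorphic to exactly one of the sphere $\sph$, a connected sum $\tor\#\cdots\#\tor$ of $g\geq 1$ tori, or a connected sum $\RP\#\cdots\#\RP$ of $k\geq 1$ real projective planes. The lower bound $\exh(n,X)=\Omega(n^{5/2})$ is already known for every surface via Linial's construction, carried out in full in \cite[\S 2]{KPTZ21}, so it only remains to verify the upper bound $\exh(n,X)=O(n^{5/2})$ in each of the three families.

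Next I would treat the three cases in turn. For $X=\sph$ this is the classical bound of S\'os, Erd\H os, and Brown \cite{SoErBr73}. For $X=\tor\#\cdots\#\tor$ I would start from \kptz's theorem $\exh(n,\tor)=O(n^{5/2})$ \cite{KPTZ21} and apply their connected-sum result --- which says that the property of admitting an $O(n^{5/2})$ bound is preserved under connected sums --- a total of $g-1$ times. For $X=\RP\#\cdots\#\RP$ I would run the identical induction, but with \Cref{mainthm} supplying the base case $\exh(n,\RP)=O(n^{5/2})$ and \kptz's connected-sum result applied $k-1$ times. Since these three families exhaust all surfaces, this finishes the argument.

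The hard part is not in this assembly at all --- it is entirely packaged inside \Cref{mainthm}. \kptz\ had already shown that once every summand of a connected sum is controlled the whole surface is controlled, and that all orientable summands (tori) are controlled; what was missing was a single non-orientable summand, and \Cref{mainthm} provides exactly that by handling $\RP$. So the genuine obstacle is proving the $O(n^{5/2})$ upper bound for the real projective plane, which is the content of the remainder of the paper; the present theorem is just the resulting catalogue over all surfaces.
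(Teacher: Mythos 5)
Your proposal matches the paper's argument: the paper derives \cref{genthm} exactly as you describe, combining the lower bound of Linial/\kptz, the upper bound of S\'os--Erd\H os--Brown for $\sph$, \kptz's torus bound plus their connected-sum lemma for the orientable case, and \cref{mainthm} plus the same connected-sum lemma for the non-orientable case, all organized by the classification of closed surfaces. The only cosmetic difference is that you spell out the induction on the number of summands explicitly, which the paper leaves implicit.
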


The remainder of this paper is organized as follows. \Cref{s:RP} describes how simpler substructures of a 3-uniform hypergraph $\HH$ may be combined to obtain a subgraph homeomorphic to $\RP$. \Cref{s:adm} introduces probabilistic methods, proving three lemmas used to bound the probability that a random subset of $V(\HH)$ contains one of these structures. We prove \cref{mainthm} in \cref{s:main} and discuss some open problems in \cref{s:conclusion}.

\paragraph*{Notation.} Let $\HH$ be a 3-uniform hypergraph. 
If $u$ is a vertex of $\HH$, its \emph{link graph} $\HH_u$ is the graph on $V(\HH)\setminus\{u\}$ whose edges $vw$ correspond to 3-edges $uvw\in E(\HH)$. For distinct vertices $u$ and $u'$ of $\HH$, we write $\HH_{u,u'}=\HH_u\cap\HH_{u'}$; that is, $\HH_{u,u'}$ is the graph on $V(\HH)\setminus\{u,u'\}$ with edge set $E(\HH_u)\cap E(\HH_{u'})$.

For convenience, we do not always distinguish between a hypergraph and its associated simplicial complex. 
For instance, we call $\HH$ a disk, or say $\HH$ is homeomorphic to the disk, if its associated simplicial complex is homeomorphic to the disk $\disk$. 
Where the distinction is essential, we write $\X(\HH)$ for the simplicial complex associated to the hypergraph $\HH$.

\section{Finding $\RP$ in a Hypergraph}\label{s:RP}

Our proof of \cref{mainthm} begins by identifying conditions under which a 3-uniform hypergraph $\HH$ contains a subgraph homeomorphic to $\RP$. The main result of this section is \cref{lem:makeRP}, which allows us to build such a subgraph from smaller substructures of $\HH$. We first discuss our techniques in \cref{ss:RP-strat}, then state and prove \cref{lem:makeRP} in \cref{ss:makeRP}.

\subsection{Deconstructing $\RP$}\label{ss:RP-strat}

To motivate our strategy for building a copy of $\RP$, let us first describe how S\'os, Erd\H os, and Brown \cite{SoErBr73} located copies of the sphere $\sph$. Suppose $\HH$ is a 3-uniform hypergraph with $n$ vertices and $cn^{5/2}$ edges, where $c$ is a sufficiently large constant. By an averaging argument, one may find vertices $u,u'$ such that $\HH_{u,u'}$ has at least $n$ edges. This implies that there is a cycle $C=v_1\cdots v_k$ in $\HH_{u,u'}$. One obtains a subgraph of $\HH$ homeomorphic to $\sph$ comprising the $2k$ edges
\[
\bigcup_{e\in E(C)}\{ue,u'e\}=\{uv_1v_2,u'v_1v_2,\ldots,uv_{k}v_1,u'v_{k}v_1\}.
\]
Such a triangulation of $\sph$ is called a \emph{double pyramid}.

The double pyramid may be viewed as the union of two interior-disjoint disks with edge sets $\{ue:e\in E(C)\}$ and $\{u'e:e\in E(C)\}$, glued together along their shared boundary $C$. The sphere has a corresponding decomposition as a CW complex, given by attaching two copies of the disk $\disk$ to a circle $\sone$. These decompositions are pictured side by side in \cref{fig:S2CW}.

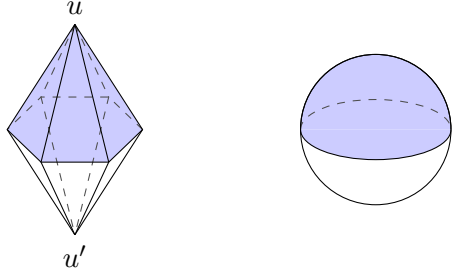
\begin{figure}[t]
\centering
\begin{tikzpicture}[faint/.style={black!70, dashed},shaded/.style={fill=blue!20}]
\begin{scope}[xscale=1.8, scale=0.5]
	\coordinate (u1) at (0,2.8);
	\coordinate (u2) at ($(0,0)-(u1)$);
	\node[above] at (u1) {$u$};
	\node[below] at (u2) {$u'$};

	\foreach \i in {0,...,5} {
		\coordinate (v\i) at (\i*60:1);
		%\draw (u1) -- (v\i) -- (u2);
	}
	
	\fill[shaded] (u1) -- (v0) -- (v5) -- (v4) -- (v3);
	
%	\fill[blue!10] (u1) -- (v3) -- (v4);
%	\fill[blue!15] (u1) -- (v4) -- (v5);
%	\fill[blue!25] (u1) -- (v5) -- (v0);
%	\fill[blue!15] (u2) -- (v3) -- (v4);
%	\fill[blue!22] (u2) -- (v4) -- (v5);
%	\fill[blue!35] (u2) -- (v5) -- (v0);

	\foreach \i in {1,2} \draw[faint] (u1) -- (v\i) -- (u2);
	\draw[faint] (v0) -- (v1) -- (v2) -- (v3);
	\foreach \i in {0,3,4,5} \draw (u1) -- (v\i) -- (u2);
	\draw (v3) -- (v4) -- (v5) -- (v0);
\end{scope}

\begin{scope}[scale=1,xshift=3cm]
	\coordinate (O) at (0,0);
	\draw[shaded] (O) arc (180:0:1);
	\draw[yscale=0.4,shaded] (O) arc (180:360:1);
	\draw[yscale=0.4, faint] (O) arc (180:0:1);
	\draw (O) arc (180:360+180:1);
\end{scope}
\end{tikzpicture}
\caption{At left, a double pyramid obtained from a cycle of length 6 in $\HH_{u,u'}$. At right, the decomposition of the sphere $\sph$ into two disks attached to $\sone$. The corresponding top disks are shaded in both images.}
\label{fig:S2CW}
\end{figure}

We decompose $\RP$ in a similar fashion, as two copies of $\disk$ attached to $\svs$.
Consider the standard representation of $\RP$ as a disk with boundary glued to itself antipodally --- this is pictured in \cref{fig:RP}. Let $a$ and $b$ be the loops in $\RP$ depicted, with $a$ traversing half the boundary of the disk and $b$ a diameter of the disk. The union of $a$ and $b$, shown in \cref{fig:svs-ab}, is a subspace of $\RP$ homeomorphic to $\svs$. Moreover, $\RP$  can be recovered from this subspace by attaching two copies of $\disk$ --- one corresponding to each semicircular region of \cref{fig:RP} --- to the concatenated loops $ab$ and $a^{-1}b$. This is summarized in the following proposition.

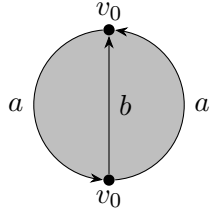
\begin{figure}[t]
\centering
\begin{tikzpicture}[>={Stealth[scale=1]}]
	\fill[black!25] (0,0) circle (1);
	\coordinate[vtx] (v) at (0,1);
	\coordinate[vtx] (w) at (0,-1);
	\node[left] at (-1,0) {$a$};
	\node[right] at (1,0) {$a$};
	\foreach \ang in {4.7} {
		\draw (-1,0) arc (180:90+\ang:1);
		\draw (-1,0) arc (180:270-2*\ang:1);
		\draw (1,0) arc (0:90-2*\ang:1);
		\draw (1,0) arc (0:-90+\ang:1);
		\draw[<-] (90-\ang:1) --++ (-1.78*\ang:1pt);
		\draw[<-] (270-\ang:1) --++ (180-1.78*\ang:1pt);
	}
%	\fill[blue!55!ProcessBlue, opacity=0.2] (v) arc (90:270:1);
%	\fill[Magenta, opacity=0.2] (w) arc (-90:90:1);
%	\draw[->] (v) arc(90:266:1) node[left,pos=0.5]{$a$};
%	\draw[->] (w) arc(-90:86:1) node[right,pos=0.5]{$a$}; % dashed
	\draw[->] (w) -- (v) node[right,pos=0.5]{$b$};
	\node[above] at (v) {$v_0$};
	\node[below] at (w) {$v_0$};
	
%	\node at (2.5,0) {$\implies$};
%	\node at (7,0) {$+$};
%	
%	\coordinate[vtx] (v2) at (5,1);
%	\coordinate[vtx] (w2) at (5,-1);
%	\coordinate[vtx] (v3) at (9,1);
%	\coordinate[vtx] (w3) at (9,-1);
%	
%	\draw[->] (v2) arc(90:266:1) node[left,pos=0.5]{$a$};
%	\draw[->] (w2) arc(-90:86:1) node[right,pos=0.5]{$b$};
%	\draw[->] (w3) arc(270:94:1) node[left,pos=0.5]{$b$};
%	\draw[->] (w3) arc(-90:86:1) node[right,pos=0.5]{$a$};

\end{tikzpicture}
%\qquad
%\begin{tikzpicture}
%\coordinate[vtx] (v0) at (0,0);
%\draw[->] (v0) to [out=30,in=-30,looseness=170] (v0);	
%\end{tikzpicture}
\caption{Two loops $a$ and $b$ in $\RP$ based at the point $v_0$. Here, $\RP$ is depicted as a disk with boundary points identified antipodally.
%The two dots represent the same point, and the two arcs labeled $a$ represent the same loop based at that point.
}
\label{fig:RP}
\end{figure}

\begin{prop}\label{prop:makeRP}
Let $a$ and $b$ be the two loops in $\svs$ shown in \cref{fig:svs-ab}. Form a CW complex from $\svs$ by attaching one disk to the loop $ab$ and another disk to the loop $a^{-1}b$. The resulting topological space is homeomorphic to $\RP$.
\end{prop}

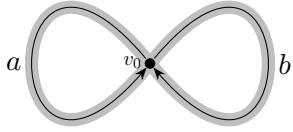
\begin{figure}[t]
\centering
\begin{tikzpicture}[ % use fbox around tikzpicture to alter bounding box
		faint/.style={black!25,line width=5pt},
		bendy/.style={looseness=60},
		>={Stealth[scale=1]}
		]
	\coordinate (SW) at (-2,-1.2);
	\useasboundingbox (SW) rectangle ($-1*(SW)$);
	\foreach \ang/\x/\y in {50/2.5pt/0.5pt} {
		\draw[faint] (\ang:\x) -- (180+\ang:\x);
		\draw[faint] (-\ang:\x) -- (180-\ang:\x);
		\draw[faint] (\ang:\x) to [out=\ang,in=-\ang,bendy] (-\ang:\x);	
		\draw[faint] (180-\ang:\x) to [out=180-\ang,in=\ang-180,bendy] (\ang-180:\x);
		\draw (\ang:\x) to [out=\ang,in=-\ang,bendy] node[right, pos=0.5] {$b$} (-\ang:\x);	
		\draw (180-\ang:\x) to [out=180-\ang,in=\ang-180,bendy] node[left, pos=0.5] {$a$} (\ang-180:\x);
		\draw[<-] (-\ang:\x-\y) -- (-\ang:\x);
		\draw[<-] (\ang-180:\x-\y) -- (\ang-180:\x);
		\fill (0,0) circle (\x-\y);
		\node[scale=0.7] at (-\x-4pt,0.4pt) {$v_0$};
	}
\end{tikzpicture}
\caption{Two loops $a$ and $b$ in $\svs$ sharing the same basepoint $v_0$.}
\label{fig:svs-ab}
\end{figure}

Let $\diskm$ be the quotient of $\disk$ obtained by gluing together two points $x,y$ on the boundary of $\disk$. \Cref{prop:makeRP} decomposes $\RP$ as a union of two copies of $\diskm$ intersecting on their shared boundary, a subspace of $\RP$ homeomorphic to $\svs$.

One might attempt to build $\RP$ using the following na\"ive approach. Choose vertices $u,u'$ and cycles $C,C'\subseteq\HH_{u,u'}$ so that $C$ and $C'$ intersect in a single vertex $v_0$, implying that $C\cup C'$ is homeomorphic to $\svs$. Let $\A,\A'\subseteq\HH$ be subgraphs induced by the edge sets $E(\A)=\{ue:e\in E(C)\cup E(C')\}$ and $E(\A')=\{u'e:e\in E(C)\cup E(C')\}$. One hopes that $\A$ and $\A'$ are copies of $\diskm$ whose union is homeomorphic to $\RP$, and indeed this is almost true. However, the 1-simplex $uv_0$ (resp.\ $u'v_0$) is contained in four different edges of $\A$ (resp.\ $\A'$), so neither $\A$ nor $\A'$ is homeomorphic to $\diskm$.

To obtain a homeomorphic copy of $\RP$, we alter the hypergraphs $\A$ and $\A'$ to avoid these four-way intersections. The resulting construction is pictured in \cref{fig:makeRP}. Let $v_1,v_2$ be the two neighbors of $v_0$ in $C$, and let $v_3,v_4$ be the two neighbors of $v_0$ in $C'$. Consider the edge subsets $D=\{uv_0v_1,uv_0v_3\}\subseteq E(\A)$ and $D'=\{u'v_0v_2,u'v_0v_3\}\subseteq E(\A')$, which correspond to disks with boundaries $v_0v_1uv_3v_0$ and $v_0v_2u'v_3v_0$ respectively. We locate alternate disk subgraphs $\DD,\DD'\subseteq\HH$ having the same boundaries, and replace $D$ and $D'$ with them. If $\DD$ and $\DD'$ are chosen appropriately, the altered hypergraphs $(\A\setminus D)\cup\DD$ and $(\A'\setminus D')\cup\DD'$ are homeomorphic to $\diskm$ with shared boundary $C\cup C'$. In fact, they are created by attaching disks to $C\cup C'$ along the loops $v_0v_2\mydots{C}v_1v_0v_3\mydots{C'}v_4v_0$ and $v_0v_1\mydots{C}v_2v_0v_3\mydots{C'}v_4v_0$, respectively. Using \cref{prop:makeRP}, one can show that their union is homeomorphic to $\RP$.

\Cref{lem:makeRP}, stated in the next subsection, formalizes this construction. It gives explicit conditions on $C,C',\DD,\DD'$ under which the union $(\A\setminus D)\cup\DD\cup(\A'\setminus D')\cup\DD'$ yields a copy of $\RP$ in $\HH$.

\begin{figure}[t]
\centering
\begin{tikzpicture}[xscale=1.3,scale=0.9]
	\coordinate (v0) at (0,0);
	\coordinate (u1) at (0,3);
	\coordinate (u2) at ($(0,0)-(u1)$);
	\foreach\r [count=\i] in {0.9,1.1,1,1.1,0.9} {
		\coordinate (x\i) at ($(\i*60+0:\r)-(1.4,0)$);
		%\draw (u1) -- (v\i) -- (u2);
	}
	\foreach\i in {1,...,4} {
		\coordinate (y\i) at ($(\i*72+180:1)+(0.73,0)$);
		%\draw (u1) -- (w\i) -- (u2);
	}
	\coordinate (v1) at (x1);
	\coordinate (v2) at (x5);
	\coordinate (v3) at (y4);
	\coordinate (v4) at (y1);
	
\foreach \cola/\colb in {blue!55!ProcessBlue/Magenta} {
	
	\fill[\cola,opacity=0.2] (v0) -- (v1) -- (u1) -- (v3) -- (v0);
	\fill[\colb,opacity=0.2] (v0) -- (v2) -- (u2) -- (v3) -- (v0);

	\draw[\cola, semithick, dashed] (v1) -- (u1) -- (v3);
	\draw[\colb, semithick, dashed] (u2) -- (v3);
	
	\foreach \v in {} % y4, x1
		\draw[gray,dashed] (u1) -- (\v);
	\foreach \v in {y3,x1,x2,x3} % y4
		\draw[gray,dashed] (u2) -- (\v);

	\draw[\colb,ultra thick] (v0) -- (v3);
    \begin{scope}[overlay]
    	\coordinate (a) at ($2*(v0) - (v3)$);
    	\coordinate (b) at ($2*(v3) - (v0)$);
    	\clip (a) -- (b) -- (a|-b);
      \draw[\cola,ultra thick] (v0) -- (v3);
    \end{scope}
	\draw[\cola, thick] (v0) -- (v1);
	\draw[\colb,thick] (v0) -- (v2) -- (u2);
	
%	\coordinate (a1) at ($(v1)!0.4!(u1)$);
%	\coordinate (a2) at ($(v1)!0.6!(v0)$);
%	\coordinate (a3) at ($(v3)!0.5!(u1)$);
%	\coordinate (a4) at ($(a1)!0.45!(a2)!0.35!(a3)$);	
%	\draw[\cola, opacity=0.5, thin] ($(v1)!0.2!(v0)$) -- (a1) -- (a2) -- (a3) -- (v0);
%	\draw[\cola, opacity=0.5, thin] (a1) -- (a3) -- (a4) -- (a1);
%	\draw[\cola, opacity=0.5, thin] (a2) -- (a4);
%	\draw[\colb, opacity=0.5, thin] (v0) -- ($(v2)!0.6!(u2)$) -- ($(v3)!0.5!(u2)$) -- (v0);
		
	\foreach \v in {x2,x3,x4,x5,v0,y1,y2,y3}
		\draw (u1) -- (\v);
	\foreach \v in {x4,v0,y1,y2} % x5
		\draw (u2) -- (\v);
	
	\draw (x1) -- (x2) -- (x3) -- (x4) -- (x5);
	\draw (v0) -- (y1) -- (y2) -- (y3) -- (y4);
		
	\foreach \v in {v0,u1,u2,x1,x2,x3,x4,x5,y1,y2,y3,y4}
		\node[scale=0.7,vtx] at (\v) {};
	
	\node[left=3pt] at (v0) {$v_0$};
	\node[below=2pt] at (v1) {$v_1$};
	\node[above left] at ($(v2)+(1pt,-1pt)$) {$v_2$};
	\node[below right] at ($(v3)-(1pt,2pt)$) {$v_3$};
	\node[above right] at ($(v4)+(-1pt,1pt)$) {$v_4$};
	\node[above] at (u1) {$u$};
	\node[below] at (u2) {$u'$};
	
	\node[\cola] at (-0.25,1) {$\DD$};
	\node[\colb] at (-0.2,-1.1) {$\DD'$};
	\node[above=9pt] at (x3) {$C$};
	\node[right] at (v0-|y2) {$C'$};
	
}
\end{tikzpicture}
\caption{Building $\RP$ from cycles $C,C'\subseteq\HH_{u,u'}$ and disks $\DD,\DD'\subseteq\HH$.}
\label{fig:makeRP}
\end{figure}

\subsection{Building a Homeomorphic Copy of $\RP$}\label{ss:makeRP}

We are almost ready to state \cref{lem:makeRP}, which is the focus of this section. However, we must first introduce some basic notation.

Suppose $X$ is a simplicial complex homeomorphic to $\disk$ or $\diskm$. (Recall that $\diskm$ is the quotient of $\disk$ obtained by gluing together two points on the boundary of $\disk$.) Its \emph{boundary} $\partial X$ is a sub-simplicial complex homeomorphic to $\sone$ or $\svs$, respectively. We say that $X$ has \emph{induced boundary} if every simplex $\Delta$ in $X$ with $\Delta\subseteq V(\partial X)$ is also a simplex of $\partial X$. We write $\Vo(X)=V(X)\setminus V(\partial X)$ for the set of \emph{interior vertices} of $X$.
For convenience, if $X=\X(\DD)$ is the simplicial complex associated to a 3-uniform hypergraph $\DD$, we may simply refer to $\partial\DD$ or $\Vo(\DD)$, or say that $\DD$ has induced boundary.

These definitions allow us to state and prove \cref{lem:makeRP}, providing conditions under which a 3-uniform hypergraph contains a subgraph homeomorphic to $\RP$. \cref{fig:makeRP} provides a visual depiction of this lemma.

\begin{lemma}\label{lem:makeRP}
	Let $\HH$ be a 3-uniform hypergraph. Let $u,u'\in V(\HH)$ and write $G=\HH_{u,u'}$. Let $v_0,v_1,v_2,v_3\in V(G)$ be distinct vertices with $v_1,v_2,v_3\in N_G(v_0)$. Suppose
	\begin{enumerate}[label=(\roman*)]
		\item $C$ and $C'$ are cycles in $G$ with $v_1v_0v_2$ a subpath of $C$ and $v_0v_3$ an edge of $C'$; and
		\item $\DD$ and $\DD'$ are subgraphs of $\HH$ homeomorphic to $\disk$ with induced boundaries $\partial\DD=v_0v_1uv_3v_0$ and $\partial\DD'=v_0v_2u'v_3v_0$.
	\end{enumerate}
	Suppose further that the five sets $V(C)\setminus\{v_0,v_1\}$, $V(C')\setminus\{v_0,v_3\}$, $\Vo(\DD)$, $\Vo(\DD')$, and $W=\{u,u',v_0,v_1,v_3\}$ are all disjoint. Then $\HH$ contains a subgraph homeomorphic to $\RP$.
\end{lemma}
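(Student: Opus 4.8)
<br>

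The plan is to realize the construction sketched in Section~\ref{ss:RP-strat} and verify, via \Cref{prop:makeRP}, that the resulting subgraph is homeomorphic to $\RP$. First I would name the relevant pieces: let $\A$ and $\A'$ be the subgraphs of $\HH$ with edge sets $\{ue : e \in E(C)\cup E(C')\}$ and $\{u'e : e \in E(C)\cup E(C')\}$, let $v_4$ be the neighbor of $v_0$ in $C'$ other than $v_3$ (using that $C'$ contains the edge $v_0v_3$), and set $D = \{uv_0v_1, uv_0v_3\}$, $D' = \{u'v_0v_2, u'v_0v_3\}$. The candidate subgraph of $\HH$ homeomorphic to $\RP$ is then $\mathcal R = \big((\A\setminus D)\cup\DD\big)\cup\big((\A'\setminus D')\cup\DD'\big)$. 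Before anything topological, I would use the disjointness hypotheses on the five sets $V(C)\setminus\{v_0,v_1\}$, $V(C')\setminus\{v_0,v_3\}$, $\Vo(\DD)$, $\Vo(\DD')$, $W$ to check that $\mathcal R$ is genuinely a \emph{subgraph} — in particular that replacing $D$ by $\DD$ and $D'$ by $\DD'$ does not accidentally glue anything, since the interiors of $\DD,\DD'$ are brand-new vertices disjoint from everything relevant, and that $(\A\setminus D)\cup\DD$ and $(\A'\setminus D')\cup\DD'$ meet $\HH$ only along the edges we expect.

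Next I would identify $\mathcal B := (\A\setminus D)\cup\DD$ and $\mathcal B' := (\A'\setminus D')\cup\DD'$ as copies of $\diskm$. For $\mathcal B$: the hypergraph $\A$ is, combinatorially, the cone from $u$ over the graph $C\cup C'$; since $C$ and $C'$ meet only at $v_0$, $C\cup C'$ is homeomorphic to $\svs$, and coning over $\svs$ with apex $u$ produces a complex whose only non-manifold points are along $uv_0$ (which lies in four triangles). Removing the two triangles $D = \{uv_0v_1, uv_0v_3\}$ and regluing the disk $\DD$ with the \emph{same} boundary cycle $v_0v_1uv_3v_0$ heals this: the edge $uv_0$ now lies in exactly two triangles of $\mathcal B$ (the two surviving cone triangles $uv_0v_2$ and $uv_0v_4$), while the edges $uv_1, uv_3, v_0v_1, v_0v_3$ formerly shared between $D$ and $\A\setminus D$ now lie in exactly two triangles as well. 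One checks $\mathcal B$ is a closed surface with boundary except that the vertex $v_0$ is a single point where two boundary arcs pinch together — concretely, $\mathcal B$ is obtained from $\svs$ (the subcomplex $C\cup C'$, which sits inside $\mathcal B$ as its "singular boundary") by attaching one disk along $v_0v_2\cdots_C v_1v_0v_3\cdots_{C'}v_4v_0$, i.e.\ along a loop that traverses $a$ then $b$ in the notation of \Cref{fig:svs-ab} with $a = v_0 v_2 \cdots_C v_1 v_0$ and $b = v_0 v_3 \cdots_{C'} v_4 v_0$. Here the induced-boundary hypothesis on $\DD$ is what guarantees no extra simplices sneak in to spoil the homeomorphism type. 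The same argument with $u\leftrightarrow u'$, $v_1\leftrightarrow v_2$ shows $\mathcal B'$ is a copy of $\diskm$ attached to the \emph{same} $\svs = C\cup C'$ along the loop $v_0v_1\cdots_C v_2v_0v_3\cdots_{C'}v_4v_0$, i.e.\ along $a^{-1}b$.

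Finally I would assemble: $\mathcal R = \mathcal B\cup\mathcal B'$, and $\mathcal B\cap\mathcal B' = C\cup C' \cong \svs$ — this intersection claim again rests on the disjointness of $\Vo(\DD),\Vo(\DD'),V(C)\setminus\{v_0,v_1\},\dots$, ensuring $\DD$ shares nothing with $\mathcal B'$ beyond its boundary arc inside $C\cup C'$ and symmetrically. So $\mathcal R$ is precisely $\svs$ with two disks attached along $ab$ and $a^{-1}b$, which by \Cref{prop:makeRP} is homeomorphic to $\RP$.

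I expect the main obstacle to be the verification in the middle paragraph that $\mathcal B$ really is $\diskm$ and not some other surface-with-pinch — this requires a careful local analysis at every vertex and edge of $\mathcal B$ (especially at $u$, $v_1$, $v_3$, and the interior vertices of $\DD$) to confirm the link of each point is an arc or circle as appropriate, and a global argument (e.g.\ an Euler-characteristic or explicit-homeomorphism argument) pinning down which surface it is. The cleanest route is probably to exhibit $\mathcal B$ directly as the mapping cylinder-type construction "$\svs$ with one $2$-cell attached along the specified loop," deforming $\DD$ (a genuine disk) onto the relevant subcomplex and citing that attaching a $2$-cell to $\svs$ along a loop of the form $ab$ with $a,b$ the two generating loops yields $\diskm$; then the induced-boundary and disjointness hypotheses are exactly the combinatorial conditions making this deformation legitimate within $\HH$. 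The rest — the intersection computation and the appeal to \Cref{prop:makeRP} — is bookkeeping with the disjointness hypotheses.
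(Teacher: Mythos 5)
Your proposal is correct and tracks the paper's proof closely: you construct the same two hypergraphs $\mathcal B = (\A\setminus D)\cup\DD$ and $\mathcal B' = (\A'\setminus D')\cup\DD'$, identify each with a copy of $\diskm$ attached to $C\cup C'\cong\svs$ along the loops $ab$ and $a^{-1}b$ respectively (with your conventions for $a,b$), check $\mathcal B\cap\mathcal B' = C\cup C'$, and invoke \Cref{prop:makeRP}. The disjointness and induced-boundary hypotheses are used in the same places and for the same purpose.

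Where the proposal is thinner than the paper is exactly where you flag it: the claim that $\mathcal B$ is a copy of $\diskm$. Your suggested routes — a link-by-link local analysis combined with an Euler-characteristic count, or ``deforming $\DD$ onto the relevant subcomplex'' — are heavier than needed (and the deformation phrasing is slightly off, since a disk does not deformation retract onto its boundary). The paper's observation, which eliminates this work, is that $\A\setminus D$ is already a disk on its own: it is the cone with apex $u$ over the simple path $P = (C\cup C')\setminus\{v_0v_1,v_0v_3\}$, so $\A\setminus D = \A_0$ is homeomorphic to $\disk$ with boundary $uv_1\mydots{P}v_3u$. There is no singularity to heal. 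One then computes directly, using the disjointness hypotheses together with the induced-boundary condition on $\DD$, that $\X(\A_0)\cap\X(\DD)$ is precisely the isolated vertex $v_0$ together with the arc $v_1uv_3$ — a point and a disjoint arc on each of the two disk boundaries. Gluing two disks along a boundary point plus a disjoint boundary arc yields $\diskm$ immediately; equivalently, $\mathcal B$ is $C\cup C'$ with a single 2-cell attached along $v_0v_1\mydots{P}v_3v_0$, which is what you want to feed into \Cref{prop:makeRP}. The induced-boundary check for $\mathcal B$ is then a one-line case analysis. With that supplied, the rest of your bookkeeping (the construction of $\mathcal B'$, the intersection computation, the appeal to \Cref{prop:makeRP}) goes through exactly as you describe.
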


\begin{proof}
Write $C=v_0w_1\cdots w_kv_0$ with $w_1=v_1$ and $w_k=v_2$. Write $C'=v_0x_1\cdots x_\ell v_0$ with $x_\ell=v_3$. By hypothesis, the vertices $u,u',v_0,w_1,\ldots,w_k,x_1,\ldots,x_\ell$ are distinct and are not contained in $\Vo(\DD)$ or $\Vo(\DD')$.

Our proof proceeds as follows. We first locate subgraphs $\A,\A'\subseteq\HH$ homeomorphic to $\diskm$ with induced boundaries $\partial \A=\partial \A'=C\cup C'$ and interior vertex sets $\Vo(\A)=\Vo(\DD)\cup\{u\}$ and $\Vo(\A')=\Vo(\DD')\cup\{u'\}$. Then, using \cref{prop:makeRP}, we deduce that the union $\A\cup\A'$ is homeomorphic to $\RP$.

Let $P$ be the path $w_1\cdots w_kv_0x_1\cdots x_\ell=(C\cup C')\setminus \{v_0v_1,v_0v_3\}$ with endpoints $v_1=w_1$ and $v_3=x_\ell$. Consider the subgraph $\A_0\subseteq\HH$ with edge set $E(\A_0)=\{ue:e\in E(P)\}$. Because $P$ repeats no vertices, $\A_0$ is a disk with boundary $\partial \A_0=uv_1\mydots{P}v_3u$. Take $\A=\A_0\cup\DD$. 

We claim that the simplicial complexes $\X(\A_0)$ and $\X(\DD)$ only intersect at the vertex $v_0$ and the path $v_1uv_3$. Because $\DD$ has induced boundary and $V(\A_0)=\{u\}\cup V(P)$ is disjoint from $\Vo(\DD)$, we have $\X(\A_0)\cap \X(\DD)\subseteq\partial \DD=v_0v_1uv_3v_0$. Clearly $\X(\A_0)$ contains $v_0$ and $v_1uv_3$. Moreover, because $v_0v_1$ and $v_0v_3$ are not edges of $P$, they are not 1-simplices of $\X(\A_0)$. Thus, $\X(\A_0)\cap \X(\DD)$ is the disjoint union of $v_0$ and $v_1uv_3$, as described.

We conclude that $\A$ is formed by gluing the disks $\A_0$ and $\DD$ together at a point $v_0$ and disjoint subpath $v_1uv_3$ of their boundaries. Alternatively, $\A$ is formed by attaching the boundary of a disk to the closed walk $v_0v_1\mydots{P}v_3v_0=v_0w_1\cdots w_kv_0x_1\cdots x_\ell v_0$. This yields that $\A$ is homeomorphic to $\diskm$ with $\partial A=C\cup C'$. Moreover, $\Vo(\A)=(V(\A_0)\cup \Vo(\DD)\cup V(\partial\DD))\setminus V(\partial\A)=\{u\}\cup\Vo(\DD)$.

Lastly, we verify that $\A$ has induced boundary, by checking which simplices of $\X(\A_0)$ and $\X(\DD)$ are fully contained in $V(\partial\A)$. Because $u\notin V(\partial\A)$, the only simplices of $\X(\A_0)$ fully contained in $V(\partial\A)$ are the vertices and edges of $P$, which are simplices of $\partial\A=C\cup C'$. Now, notice that $V(\DD)\cap V(\partial\A)=\{v_1,v_0,v_3\}\subseteq V(\partial\DD)$. Because $\DD$ has induced boundary, the only simplices of $\X(\DD)$ fully contained in $V(\partial \A)$ are those simplices of $\partial\DD=v_0v_1uv_3v_0$ fully contained in $\{v_1,v_0,v_3\}$, i.e.\ the vertices and edges of the path $v_1v_0v_3$. These are all simplices of $\partial\A=C\cup C'$, so $\A$ has induced boundary.

The construction of $\A'$ is analogous. Let $P'=w_k\cdots w_1v_0x_1\cdots x_\ell=(C\cup C')\setminus \{v_0v_2,v_0v_3\}$. Let $\A_0'\subseteq\HH$ have edge set $E(\A_0')=\{u'e:e\in E(P')\}$ and take $\A'=\A'_0\cup\DD'$. An analogous argument shows that $\A'$ is formed by attaching the boundary of a disk to the closed walk $v_0v_2\mydots{P'}v_3v_0=v_0w_k\cdots w_1v_0x_1\cdots x_\ell v_0$, and that $\A'$ has induced boundary $\partial\A'=C\cup C'$ and interior vertex set $\Vo(\A')=\{u'\}\cup V(\DD')$.

We end our proof by showing that the union $\A\cup\A'$ is homeomorphic to $\RP$. Because $\A$ and $\A'$ induce their boundaries and have disjoint interior vertex sets $\Vo(\A)=\{u\}\cup\Vo(\DD)$ and $\Vo(\A')=\{u'\}\cup\Vo(\DD')$, the simplicial complexes $\X(\A)$ and $\X(\A')$ only intersect on their shared boundary $C\cup C'$. Thus, $\A\cup\A'$ is formed by attaching two interior-disjoint copies of $\disk$ to $C\cup C'$, identifying the disks' boundaries with the two closed walks $v_0w_1\cdots w_kv_0x_1\cdots x_\ell v_0$ and $v_0w_k\cdots w_1v_0x_1\cdots x_\ell v_0$. By \cref{prop:makeRP}, the resulting topological space is homeomorphic to $\RP$.
\end{proof}

\section{Probabilistic Preliminaries}\label{s:adm}

\Cref{lem:makeRP} reduces \cref{mainthm} to finding substructures $C,C',\DD,\DD'$ of a 3-uniform hypergraph $\HH$ satisfying the relevant hypotheses. We locate these substructures via a probabilistic approach, analyzing the likelihood that a randomly chosen subset of $V(\HH)$ will contain each of these substructures. To quantify these probabilities, we require some new definitions. The terms \emph{admissible} and \emph{semi-admissible} (\cref{dfn:adm-1,dfn:adm-2} below) were introduced by \kptz \cite{KPTZ21} and were crucial to their upper bound on the Tur\'an number of the torus, which also used probabilistic methods.

This section proves three technical lemmas --- \cref{lem:makefineprob,lem:denseadm,lem:findv0} --- which rely on this terminology. When combined, these lemmas allow us to bound the probability that each of the above substructures exists in a randomly chosen subset of $V(\HH)$, paving the way for the proof of \cref{mainthm} in \cref{s:main}.

Let $p\in[0,1]$ and let $V$ be a set. The notation $U\sim_p V$ indicates that $U$ is a random subset of $V$ which independently contains each $v\in V$ with probability $p$. Now, let $G$ be a graph, and let $U\subseteq V(G)$ be a set of vertices. Given $x,y\in V(G)$, a path from $x$ to $y$ \emph{through $U$} is a path from $x$ to $y$ of length at least 2 in $G[U\cup\{x,y\}]$.

\begin{dfn}\label{dfn:adm-1}
Fix $p,\eps\in(0,1]$ and let $k$ be a positive integer. Let $G$ be a graph and $e=xy$ an edge of $G$. Sample $U\sim_p V(G)$ and let $A_e$ be the event that there are at least $k$ internally vertex-disjoint paths from $x$ to $y$ through $U$. We say the edge $e$ is \emph{$(p,\eps,k)$-admissible} if $\Pr[A_e]\geq 1-\eps$.
\end{dfn}

\begin{dfn}\label{dfn:adm-2}
Let $\HH$ be a 3-uniform hypergraph and let $e=xyz$ and $f=x'yz$ be neighboring edges of $\HH$. Say the pair $(e,f)$ is \emph{$(p,\eps,k)$-admissible} if $yz$ is $(p,\eps,k)$-admissible in the graph $\HH_{x,x'}$. Also, for a  positive integer $r$, say that $(e,f)$ is \emph{$(p,\eps,k,r)$-semi-admissible} if there exist at least $r$ edges $g=x''yz$ such that $(e,g)$ and $(g,f)$ are both $(p,\eps,k)$-admissible.
\end{dfn}

We remark that, although \cref{dfn:adm-1,dfn:adm-2} seem convoluted at first glance, they are in fact quite convenient to work with. Our first technical lemma, which locates disks within random subsets of $V(\HH)$, illustrates their use.

Suppose $(e,f)=(xyz,x'yz)$ is a pair of $(p,\eps,k)$-admissible edges in $\HH$. One corollary of the admissibility condition is that that there are many double pyramids $\SS$ containing $e$ and $f$, each obtained from a cycle in $\HH_{x,x'}$ containing $yz$. Cutting any such $\SS$ along the loop $yxzx'y$ partitions $\SS$ into two disks $\{e,f\}$ and $\SS\setminus\{e,f\}$, both having boundary $yxzx'y$. Furthermore, it is not difficult to see that $\DD=\SS\setminus\{e,f\}$ has induced boundary. \Cref{lem:makefineprob} extends this idea to a more general setting: given only a semi-admissibility condition on $(e,f)$, we locate such a disk $\DD$ within a randomly chosen subset of $V(\HH)$.

\begin{lemma}\label{lem:makefineprob}
	Fix $p,\eps\in(0,\frac 12]$ and positive integers $k,r$. Let $\HH$ be a 3-uniform hypergraph. Let $(e,f)=(xyz,x'yz)$ be a pair of $(p,\eps,k+2,r)$-semi-admissible edges in $\HH$, and fix a set $W\subseteq V(\HH)$ with $|W|\leq k$. Randomly sample $U\sim_{2p}V(\HH)$. Then with probability at least $1-2r\eps-(1-2p)^{r-k}$, there is $\DD\subseteq\HH$ homeomorphic to $\disk$ with induced boundary $\partial\DD=yxzx'y$ and interior vertex set $\Vo(\DD)\subseteq U\setminus W$.
\end{lemma}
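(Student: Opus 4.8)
The plan is to use the $(p,\eps,k+2,r)$-semi-admissibility of $(e,f)=(xyz,x'yz)$ to produce a large family of auxiliary vertices $x''$ such that the edges $xyz$, $x''yz$, $x'yz$ pairwise have the admissibility property needed to build two half-disks, and then union two such half-disks along the loop $yxzx'y$ (or rather, along $yx''z$) to get the required disk $\DD$. First I would unwind the definitions: semi-admissibility gives a set $R$ of at least $r$ vertices $x''$ with $(e,g)$ and $(g,f)$ both $(p,\eps,k+2)$-admissible, where $g=x''yz$. Fix such an $x''$. By \cref{dfn:adm-2}, this means $yz$ is $(p,\eps,k+2)$-admissible in both $\HH_{x,x''}$ and $\HH_{x'',x'}$. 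The idea is that $2p$-sampling $U$ can be coupled with two independent $p$-samples $U_1,U_2$ (take $U_1\sim_p V$ and $U_2\sim_p V$ independent, and note a vertex lands in $U_1\cup U_2$ with probability $1-(1-p)^2\le 2p$, so $U_1\cup U_2$ is stochastically dominated by a $2p$-sample — so it suffices to find $\DD$ with interior in $U_1\cup U_2$). Using $U_1$ for the admissibility event in $\HH_{x,x''}$ and $U_2$ for the one in $\HH_{x'',x'}$, with probability at least $1-2\eps$ we get $k+2$ internally disjoint $y$–$z$ paths through $U_1$ in $\HH_{x,x''}$ and $k+2$ such paths through $U_2$ in $\HH_{x'',x'}$.

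Next I would do the greedy/counting step to avoid the forbidden set $W$ (with $|W|\le k$) and the already-used vertices $\{x,x',x'',y,z\}$. Among the $k+2$ internally disjoint paths in $\HH_{x,x''}$, at most $k$ can meet $W$ and at most one more can meet $\{x,x',x'',y,z\}\setminus\{y,z\}$ in its interior (actually $x,x'$ can each appear in at most one path), so at least one path $Q_1$ has interior disjoint from $W\cup\{x,x',x'',y,z\}$; similarly pick $Q_2$ in $\HH_{x'',x'}$. Then the two disks $\{x Q_1 z\text{-fan}\}$... more precisely: the path $Q_1$ from $y$ to $z$ in $\HH_{x,x''}$ together with the cone from $x$ over $Q_1$ and the cone from $x''$ over $Q_1$ forms a double-pyramid-like sphere; cutting it gives a disk $\DD_1\subseteq\HH$ with boundary $y x z x'' y$ and interior vertices the interior of $Q_1$. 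Symmetrically $\DD_2$ has boundary $y x'' z x' y$ with interior the interior of $Q_2$. If $Q_1$ and $Q_2$ have disjoint interiors — which I arrange by choosing $Q_1$ first and then choosing $Q_2$ avoiding $Q_1$'s interior, which costs at most $|Q_1|\le$ (a bounded number, absorbable since there are $k+2$ choices and we've only excluded $k+$ constant many) — then $\DD=\DD_1\cup\DD_2$ is homeomorphic to $\disk$ with boundary $yxzx'y$ and induced boundary. One has to check the induced-boundary condition carefully, exactly as in the $\A=\A_0\cup\DD$ argument in the proof of \cref{lem:makeRP}: $\DD_1$ and $\DD_2$ meet only along the path $yx''z$, which is a common boundary subpath, so the union is a disk; and since all interior vertices avoid $\{x,x',x'',y,z\}$ and the two disks each induce their boundaries, the only simplices of $\DD$ inside $V(\partial\DD)=\{y,x,z,x'\}$ are the edges $yx,xz,zx',x'y$ (note $yz$ itself is not an edge of $\DD$ — this is the point of cutting).

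Finally I would assemble the probability bound. For a fixed $x''\in R$, the failure probability (of not getting the two admissibility events) is at most $2\eps$. We want at least one $x''\in R$ to succeed *and* to be usable, but actually we only need one success. Running a union bound over all $r$ vertices of $R$ would give $r\cdot 2\eps$, but the events for different $x''$ are not independent. Instead I would pick a *random* subset $R'\sim_{?}$... no — cleaner: note the sampling $U$ is fixed once; for each $x''\in R$ let $B_{x''}$ be the bad event that the two path-families don't both appear (using the coupling $U\supseteq U_1\cup U_2$). The term $(1-2p)^{r-k}$ in the statement suggests the following: we actually want $R\cap U$ (or rather the subset of $R$ whose relevant structure lands in $U$) to be nonempty after discarding up to $k$ elements of $W$, which happens with probability $1-(1-2p)^{r-k}$, and *conditioned on a surviving $x''$*, the two admissibility events fail with probability $\le 2\eps$ each — but to make this a clean union bound I would instead bound $\Pr[\text{failure}]\le \Pr[\text{all }x''\in R\setminus W\text{ fail the admissibility events}] $ and control this by: for each of the $\ge r-k$ vertices $x''\in R\setminus W$, failure needs a bad event of probability... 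The main obstacle, and the part I'd expect to require the most care, is exactly this final aggregation: getting the clean bound $1-2r\eps-(1-2p)^{r-k}$ rather than the naive $1-2r\eps$. I believe the right argument is: with probability $\ge 1-(1-2p)^{r-k}$ there exists $x''\in R\setminus W$ with $x''\in U$ (crude, but $W$ blocks at most $k$); then for that particular $x''$ one still has to run the conditional argument — so more likely the intended proof fixes in advance a way to extract, for *each* $x''\in R$, the admissibility events, takes a union bound $2r\eps$ over the $\le r$ of them for the admissibility failures, and *separately* pays $(1-2p)^{r-k}$ for the event that no valid $x''$ survives $W$ in the sampling. I would structure the write-up to make these two error sources visibly disjoint — admissibility failures (bounded by $2r\eps$ via union bound over the $\le r$ candidate pairs) and the "$W$ kills everything" event (bounded by $(1-2p)^{r-k}$) — and verify that away from both, the construction of $\DD$ goes through.
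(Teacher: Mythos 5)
Your overall architecture matches the paper's: extract $r$ candidate vertices $x''$ from semi-admissibility, build two half-disks (double cones over a $y$--$z$ path) with boundaries $yxzx''y$ and $yx''zx'y$, glue them along $yx''z$, and split the error into a $2r\eps$ union bound over admissibility failures plus a $(1-2p)^{r-k}$ term for the event that no candidate $x''$ lands in $U\setminus W$. The final aggregation you were unsure about is exactly what you guessed at the end: one takes a union bound so that \emph{all} $2r$ path-existence events hold simultaneously with probability $\geq 1-2r\eps$, and independence between candidates is irrelevant because only complements are union-bounded.

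However, there is a genuine gap in how you ensure the two paths $Q_1\subseteq\HH_{x,x''}$ and $Q_2\subseteq\HH_{x'',x'}$ have disjoint interiors. You propose to choose $Q_1$ first and then pick $Q_2$ among the $k+2$ internally disjoint candidates avoiding $\mathrm{int}(Q_1)$, asserting the cost is ``a bounded number, absorbable.'' It is not: admissible paths have length at least $2$ but no upper bound, so $\mathrm{int}(Q_1)$ can contain far more than $k+2$ vertices, and since the $k+2$ candidates for $Q_2$ are only internally \emph{disjoint from each other}, each vertex of $\mathrm{int}(Q_1)$ can kill a different candidate --- potentially all of them. The margin of $k+2$ is budgeted precisely for $W\cup\{x\}$ (at most $k+1$ vertices) and nothing more. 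The paper's fix is structural rather than greedy: after sampling $U\sim_{2p}V(\HH)$, it sets $U_1\sim_{1/2}U$ and $U_2=U\setminus U_1$, so that each $U_i$ is marginally a $p$-sample (which is all the admissibility definition requires) while $U_1\cap U_2=\emptyset$; routing $Q_1$ through $U_1$ and $Q_2$ through $U_2$ then makes their interiors disjoint automatically. Your coupling via two \emph{independent} $p$-samples gives the right marginals but forfeits exactly this disjointness, which is the one property you actually need. Replacing your coupling with the disjoint splitting repairs the argument and the rest of your write-up goes through.
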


\begin{proof}
Let $T$ be a set of $r$ vertices such that for each $w\in T$, the triple $g=wyz$ is an edge of $\HH$ with $(e,g)$ and $(g,f)$ both $(p,\eps,k+2)$-admissible.

Sample $U_1\sim_{\frac 12}U$ and let $U_2=U\setminus U_1$. For each $w\in T$, consider the following three events.
\begin{itemize}
	\item[($A_w$)] $w\in U\setminus W$.
	\item[($B_w$)] There is a path $P_w\subseteq\HH_{x,w}$ from $y$ to $z$ through $U_1\setminus (W\cup\{x'\})$.
	\item[($C_w$)] There is a path $Q_w\subseteq\HH_{w,x'}$ from $y$ to $z$ through $U_2\setminus (W\cup\{x\})$.
\end{itemize}
Suppose $A_w$, $B_w$, and $C_w$ hold for some $w\in T$. Write $P_w=ya_1\cdots a_sz$ and $Q_w=yb_1\cdots b_tz$ with $s,t\geq 1$. For convenience, set $a_0=b_0=y$ and $a_{s+1}=b_{t+1}=z$. Consider the subgraph $\DD$ of $H$ with edge set
\[
E(\DD)=\left(\bigcup_{i=0}^s\{xa_ia_{i+1},wa_ia_{i+1}\}\right)
	\cup\left(\bigcup_{j=0}^t\{wb_jb_{j+1},x'b_jb_{j+1}\}\right)
\]
as pictured in \cref{fig:makedisk}. Because the $5+s+t$ vertices pictured are all distinct, it follows that $\DD$ is a disk with boundary $\partial\DD=yxzx'y$. Notice that each simplex of $\X(\DD)$ contains at most one vertex of $\{x,x'\}$ and at most one vertex in $\{y,z\}$, the latter because $s,t\geq 1$. It follows that the only simplices of $\X(\DD)$ that are fully contained in $V(\partial\DD)$ are the 1-simplices $xy,x'y,xz,x'z$ and their vertices, implying that $\DD$ has induced boundary. Lastly, notice that conditions $A_w$, $B_w$, and $C_w$ together imply that
\[\Vo(\DD)=\{w,a_1,\ldots,a_s,b_1,\ldots,b_t\}\subseteq U\setminus W.\]
We conclude that if the events $A_w$, $B_w$, and $C_w$ simultaneously hold for some $w\in T$, then there is a disk $\DD\subseteq\HH$ with induced boundary $\partial\DD=yxzx'y$ and interior vertex set $\Vo(\DD)\subseteq U\setminus W$. It remains to bound the probability that $A_w$, $B_w$, and $C_w$ hold simultaneously for some $w\in T$.

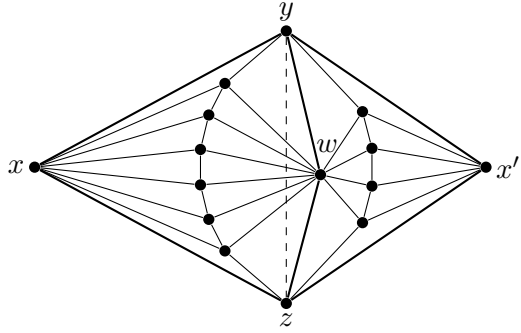
\begin{figure}[t]
\centering
\begin{tikzpicture}
	\foreach \ang in {65} {
		\path (-1,0) arc (0:\ang:2) coordinate[vtx] (y);
		\path (-1,0) arc (0:-\ang:2) coordinate[vtx] (z);
		\coordinate (v0) at (y); \coordinate (w0) at (y);
		\foreach \i [evaluate=\ii using (\i+1.3)/9.6] in {1,...,6} {
			\path (y) arc(-\ang+180:\ang+180:2)
				coordinate[vtx,pos=\ii] (v\i);
		}
		\foreach \i [evaluate=\ii using (\i+2)/9] in {1,...,4} {
			\path (y) arc(\ang:-\ang:2)
				coordinate[vtx,pos=\ii] (w\i);
		}
	}
	\draw[dashed] (y) -- (z);
	\coordinate[vtx] (x1) at (-5.5,0);
	\coordinate[vtx] (x2) at (-1.7,-0.1);
	\coordinate[vtx] (x3) at (0.5,0);
	\foreach \i in {1,...,6}
		\draw (x1) -- (v\i) -- (x2);
	\foreach \i in {1,...,4}
		\draw (x2) -- (w\i) -- (x3);
	\draw[thick] (z) -- (x1) -- (y) -- (x2) -- (z) -- (x3) -- (y);
	\draw (y) -- (v1) -- (v2) -- (v3) -- (v4) -- (v5) -- (v6) -- (z);
	\draw (y) -- (w1) -- (w2) -- (w3) -- (w4) -- (z);
	
	\node[left] at (x1) {$x$};
	\node at ($(x2) + (0.09,0.39)$) {$w$};
	\node[right] at (x3) {$x'$};
	\node[above] at (y) {$y$};
	\node[below] at (z) {$z$};
\end{tikzpicture}
\caption{A disk $\DD\subseteq\HH$ constructed from the semi-admissible pair $(xyz,x'yz)$.}
\label{fig:makedisk}
\end{figure}

Because $(e,g)$ is $(p,\eps,k+2)$-admissible and $U_1\sim_p V$, there are $k+2$ internally vertex-disjoint paths in $\HH_{x,w}$ from $y$ to $z$ through $U_1$ with probability at least $1-\eps$. At least one of these paths avoids every vertex in $W\cup\{x'\}$, so $\Pr[B_w]\geq 1-\eps$. Similarly, $\Pr[C_w]\geq 1-\eps$ for each $w\in T$. A union bound yields that the events $B_w$ and $C_w$ hold simultaneously for all $w\in T$ with probability at least $1-2r\eps$.

It remains to bound the probability that $A_w$ holds for some $w\in T$. This is the probability that $T\cap (U\setminus W)=(T\setminus W)\cap U$ is nonempty, which is $1-(1-2p)^{|T\setminus W|}$. Thus, another union bound yields
\begin{align*}
\Pr[&\exists w\in T\colon A_w,B_w,C_w\text{ all hold}]
\\&\geq\Pr[\text{($B_w$ and $C_w$ hold for all $w\in T$) and ($A_w$ holds for some $w\in T$)}]
\\&\geq 1-2r\eps-(1-2p)^{|T\setminus W|}\geq 1-2r\eps-(1-2p)^{r-k}.\qedhere
\end{align*}

\end{proof}

The remaining two lemmas in this section help us find the vertices $u,u',v_0,v_1,v_2,v_3$ described in \cref{lem:makeRP}. \Cref{lem:denseadm} locates vertices $u,u'$ and a large subgraph $G\subseteq\HH_{u,u'}$ which gives rise to many semi-admissible edge pairs. \Cref{lem:findv0} then identifies incident edges $v_0v_1$ and $v_0v_3$ in $G$ satisfying admissibility conditions in a subgraph $H\subseteq G$. \Cref{lem:findv0} also upper-bounds $\deg_H(v_0)$; this allows us to randomly select $v_2$ from $N_H(v_0)$ while lower-bounding the probability that $v_2$ takes on any fixed value.

\begin{lemma}\label{lem:denseadm}
Fix $p,\eps\in(0,1]$, integers $r,k$, and a large constant $d$. There is a constant $c=c(p,\eps,r,k,d)$ such that the following holds.

Suppose $\HH$ is a 3-uniform hypergraph on $n$ vertices with at least $cn^{5/2}$ edges. Then there are vertices $u,u'\in V(\HH)$ and a graph $G\subseteq\HH_{u,u'}$ with at least $dn/4$ edges such that, for any incident edges $vw,vw'\in E(G)$, the pairs $(uvw,uvw')$ and $(u'vw,u'vw')$ are both $(p,\eps,k,r)$-semi-admissible in $\HH$.
\end{lemma}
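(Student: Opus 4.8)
The goal is to pass from a global density assumption ($cn^{5/2}$ edges in $\HH$) to a local structure: two vertices $u,u'$ whose common link $\HH_{u,u'}$ contains a subgraph $G$ that is both dense ($\geq dn/4$ edges) and ``robustly connected'' in the sense that every cherry (path of length $2$) in $G$ lifts to a semi-admissible pair both above $u$ and above $u'$. I would proceed in two stages: first throw away edges of $\HH$ that fail to be $(p,\eps,k)$-admissible (in the relevant link graphs) until only a robust core remains, then do a double counting / averaging argument on the surviving hypergraph to extract $u,u'$ and $G$.

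\medskip

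\textbf{Step 1: a cleaning step for admissibility.} The key point is that \emph{non}-admissible edges are rare. I would prove (or cite, since this is exactly the kind of estimate \kptz\ use) that for fixed $p,\eps,k$ there is a constant $K = K(p,\eps,k)$ such that in any graph $\Gamma$, the number of edges that are \emph{not} $(p,\eps,k)$-admissible is at most $K\cdot|V(\Gamma)|$ — intuitively, an edge $xy$ fails to be $(p,\eps,k)$-admissible only if the ``local'' structure around $xy$ is thin (few short internally-disjoint $x$–$y$ paths surviving a $p$-random sample), and a graph can have only linearly many such edges before it accumulates a dense subgraph forcing many disjoint paths. Granting this, I iteratively clean $\HH$: repeatedly delete any pair $(e,f)=(xyz,x'yz)$ of neighboring edges for which $yz$ is non-admissible in $\HH_{x,x'}$ — equivalently, in each link graph $\HH_{x,x'}$ delete the (at most $Kn$) non-admissible edges. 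Since each deletion of an edge $yz$ from a link graph $\HH_{x,x'}$ corresponds to deleting two hyperedges $xyz$, $x'yz$ from $\HH$, and there are $\binom n2$ link graphs each losing $\leq Kn$ edges, this removes at most $O(n^3)$ hyperedges in one pass — which is too much. So instead I must be cleverer: iterate vertex-by-vertex or argue that $\HH$ already has $cn^{5/2} = \omega(n^{5/2})$ edges and the cleaning to make \emph{most} links robust costs only $o(cn^{5/2})$. The honest route is: by averaging, most pairs $u,u'$ have $|E(\HH_{u,u'})| \geq c' n^{1/2}$ for suitable $c'$, and within such a dense graph on $n$ vertices, after deleting $Kn$ non-admissible edges at least $c'n^{1/2} - Kn$... no — this is negative. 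The resolution (and this is the crux, see below) is that we do not need \emph{every} pair $u,u'$ to have a dense link; we need \emph{one} pair with a dense link whose cherries are admissible, and dense link graphs automatically have few non-admissible edges \emph{relative to their edge count} once the link itself is dense enough.

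\medskip

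\textbf{Step 2: averaging to find $u,u'$ and $G$.} With the right cleaning in hand, the extraction is standard. A hypergraph with $cn^{5/2}$ edges has, by convexity, many ``codegree-heavy'' pairs: writing $d_{u,u'} = |E(\HH_{u,u'})|$, we have $\sum_{u,u'} d_{u,u'} = 3\binom{?}{} \cdot |E(\HH)|$ counting incidences of pairs inside hyperedges, so $\sum_{u,u'} d_{u,u'} = 3|E(\HH)| \geq 3cn^{5/2}$, hence the average of $d_{u,u'}$ over the $\binom n2$ pairs is $\geq 6cn^{5/2}/n^2 = 6c\sqrt n$. A Markov/convexity argument then gives a pair $u,u'$ with $d_{u,u'}$ much larger than any fixed multiple of $n$ — in particular $\geq 2Kn + dn/4$ — once $c$ is large enough in terms of $K$ and $d$. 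Now I take $G$ to be $\HH_{u,u'}$ with all edges that are non-admissible-in-some-relevant-link removed: the edges to discard are those $vw$ such that either $(uvw,uvw')$ or $(u'vw, u'vw')$ fails semi-admissibility for some incident $vw'$. Here I invoke the structure: an edge $vw \in E(\HH_{u,u'})$ is ``bad'' if $vw$ is non-admissible in $\HH_{v,v'}$ or $\HH_{w,w'}$ for too many choices of the fourth vertex — but semi-admissibility of $(uvw,uvw')$ only requires $r$ \emph{good} intermediate vertices $u''$ with $vw$ admissible in $\HH_{u,u''}$ and $\HH_{u'',u'}$ in some sense. I would show the count of edges of $\HH_{u,u'}$ failing the required semi-admissibility is $O(n)$ — at most $(\text{const})n$ — by a union bound over the linearly-many non-admissible edges in each of the $O(n)$ auxiliary link graphs, times the constraint that each contributes to few edges of $\HH_{u,u'}$. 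Deleting these $\leq 2Kn$ edges from $\HH_{u,u'}$ leaves $G$ with $\geq d_{u,u'} - 2Kn \geq dn/4$ edges, and by construction every cherry $vw, vw'$ in $G$ has $(uvw,uvw')$ and $(u'vw,u'vw')$ both $(p,\eps,k,r)$-semi-admissible. Choosing $c = c(p,\eps,r,k,d)$ large enough to absorb $K$ (which depends only on $p,\eps,k$) and the constants in the averaging completes the proof.

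\medskip

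\textbf{Main obstacle.} The crux is the quantitative claim that non-admissible edges are linearly bounded, and — more delicately — that an edge failing \emph{semi}-admissibility (needing $r$ good intermediate vertices) can be blamed on a bounded number of non-admissible edges in auxiliary link graphs, so that the total count of bad edges in $\HH_{u,u'}$ stays $O(n)$ and is swamped by the $\Omega(\sqrt n \cdot n)$-sized link we extracted. Getting the book-keeping right so that a single large constant $c$ (depending on $p,\eps,r,k,d$ but not $n$) suffices — i.e. that all the ``$O(n)$'' losses really are at most $(\text{absolute const})\cdot n$ with the constant independent of $n$ — is where the real work lies; the topology is entirely in \Cref{lem:makeRP} and the averaging here is routine once that bound is established. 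I expect the non-admissibility estimate is essentially Lemma-from-\cite{KPTZ21} and would quote it rather than reprove it.
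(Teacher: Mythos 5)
Your proposal reverses the paper's order of operations, and the gap is in the cleaning step. The paper first invokes Lemma~3.3 of \kptz\ (restated here as \cref{lem:semiadm}), which produces a subset $F\subseteq E(\HH)$ of size at least $\tfrac12|E(\HH)|$ in which \emph{every} pair of neighboring edges is already $(p,\eps,k,r)$-semi-admissible, and only then averages over pairs $u,u'$ in the sub-hypergraph with edge set $F$; taking $G$ to be its link at $u,u'$ inherits the semi-admissibility of all cherries automatically. You instead propose to average first and clean $\HH_{u,u'}$ afterwards, but that cleaning is not local: by \cref{dfn:adm-2}, semi-admissibility of $(uvw,uvw')$ depends on the edge $uv$ being $(p,\eps,k)$-admissible in the auxiliary link graphs $\HH_{w,u''}$ and $\HH_{u'',w'}$ for at least $r$ choices of intermediate vertex $u''$, so ``bad'' edges of $\HH_{u,u'}$ cannot be identified by inspecting $\HH_{u,u'}$ alone, and \cref{lem:adm} (which bounds non-admissible edges in a single graph) does not give the $O(n)$ bound you claim for this multi-link condition. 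You dismissed cleaning $\HH$ globally because you estimated the cost as $O(n^3)$ hyperedges, but \cref{lem:semiadm} performs exactly that cleaning while losing only half the edges; you suspected the right reference existed, but you had \cref{lem:adm} in mind rather than \cref{lem:semiadm}.

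Separately, your double count in Step~2 is off: $\sum_{\{u,u'\}}d_{u,u'}$ is not $3|E(\HH)|$. What is true is $\sum_{vw}T_{vw}=3|E(\HH)|$, where $T_{vw}$ is the codegree of the pair $vw$ in $\HH$, whereas $\sum_{\{u,u'\}}d_{u,u'}=\sum_{vw}\binom{T_{vw}}2$. Running convexity on $\binom{T}{2}$ then shows the average of $d_{u,u'}$ is $\Theta(c^2n)$, not $\Theta(c\sqrt n)$ --- superlinear, as your plan needs --- but this must be carried out carefully, as the paper does, rather than read off the wrong identity.
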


The proof of \cref{lem:denseadm} relies on the following lemma of \kptz \cite{KPTZ21}, which finds a positive-density subset of $E(\HH)$ with many semi-admissible pairs.

\begin{lemma}[Lemma 3.3 in \cite{KPTZ21}]
\label{lem:semiadm}
Let $p,\eps\in(0,1]$ and let $k,r,n$ be positive integers. Let $\HH$ be a 3-uniform hypergraph with $n$ vertices and at least $\frac{12r}p\sqrt{\frac k\eps}n^{5/2}$ edges. Then $E(\HH)$ contains a subset $F$ of at least $\frac 12|E(\HH)|$ edges such that any pair of neighboring edges in $F$ is $(p,\eps,k,r)$-semi-admissible in $\HH$.
\end{lemma}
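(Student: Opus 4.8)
The plan is to deduce \cref{lem:denseadm} from \cref{lem:semiadm} by way of a codegree averaging argument of the kind used by S\'os, Erd\H os, and Brown \cite{SoErBr73} to locate a dense common link graph.

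First I would pick $c$ large enough (in terms of $p,\eps,r,k,d$) that $cn^{5/2}\ge\frac{12r}p\sqrt{k/\eps}\,n^{5/2}$, so that \cref{lem:semiadm} applies to $\HH$ and yields a set $F\subseteq E(\HH)$ with $|F|\ge\frac12|E(\HH)|\ge\frac c2 n^{5/2}$ in which every pair of neighboring edges is $(p,\eps,k,r)$-semi-admissible in $\HH$. From here I work inside $F$. For an unordered pair of vertices $\{v,w\}$ let $d(v,w)$ be its codegree in $F$ (the number of $x$ with $xvw\in F$), and for distinct vertices $u,u'$ let $F_{u,u'}$ be the graph on $V(\HH)\setminus\{u,u'\}$ whose edges are the pairs $vw$ with $uvw,u'vw\in F$. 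Counting in two ways the triples $(\{u,u'\},\{v,w\})$ with $uvw,u'vw\in F$ gives
\[
\sum_{\{u,u'\}}|E(F_{u,u'})|=\sum_{\{v,w\}}\binom{d(v,w)}{2}.
\]
Since $\sum_{\{v,w\}}d(v,w)=3|F|$, convexity of $x\mapsto\binom x2$ gives $\sum_{\{v,w\}}\binom{d(v,w)}{2}\ge\binom n2\binom{\bar d}{2}$ with $\bar d=3|F|/\binom n2\ge 3c\sqrt n$, so the average of $|E(F_{u,u'})|$ over the $\binom n2$ pairs $\{u,u'\}$ is at least $\binom{\bar d}{2}=\Omega(c^2n)$, which exceeds $dn/4$ once $c$ is large enough in terms of $d$. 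I then fix a pair $\{u,u'\}$ with $|E(F_{u,u'})|\ge dn/4$ and set $G=F_{u,u'}$.

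It remains to verify $G$ has the stated properties. By construction $|E(G)|\ge dn/4$, and $G\subseteq\HH_{u,u'}$ since $F\subseteq E(\HH)$. Finally, if $vw$ and $vw'$ are incident edges of $G$ with $w\ne w'$, then $uvw,uvw'\in F$ and these two edges share exactly the $2$-set $\{u,v\}$, hence are neighboring edges of $F$; the defining property of $F$ then makes $(uvw,uvw')$ a $(p,\eps,k,r)$-semi-admissible pair in $\HH$, and symmetrically so is $(u'vw,u'vw')$. That is everything the lemma asks for. I do not anticipate a real obstacle: the content of the lemma sits inside \cref{lem:semiadm}, which we invoke as a black box, and the rest is routine. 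The only care needed is bookkeeping --- correctly pinning down the shared $2$-set of $uvw$ and $uvw'$ so that ``neighboring in $F$'' can be applied, and checking the convexity bound has enough slack to clear $dn/4$ after absorbing lower-order terms by enlarging $c$ (for $n$ below an absolute constant the hypothesis $|E(\HH)|\ge cn^{5/2}$ is vacuous, so small $n$ is not an issue).
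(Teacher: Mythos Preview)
Your proposal does not address the stated lemma. The statement you were asked to prove is \cref{lem:semiadm} (Lemma~3.3 of \cite{KPTZ21}), which asserts the existence of the large subset $F\subseteq E(\HH)$ with the semi-admissibility property. Your argument instead \emph{assumes} \cref{lem:semiadm} as a black box and derives \cref{lem:denseadm} from it. So as a proof of \cref{lem:semiadm} this is circular: you never establish why such an $F$ exists in the first place. (The paper itself does not prove \cref{lem:semiadm} either; it is quoted from \cite{KPTZ21}, so there is no ``paper's own proof'' of this statement to compare against.)

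If your intended target was actually \cref{lem:denseadm}, then your argument is correct and is essentially identical to the paper's: apply \cref{lem:semiadm} to obtain $F$, then run the S\'os--Erd\H os--Brown codegree/convexity averaging to find $u,u'$ with $|E(F_{u,u'})|\ge dn/4$, and set $G=F_{u,u'}$. The paper makes the constant explicit as $c=\max\bigl(\tfrac{12r}{p}\sqrt{k/\eps},\,\sqrt{d/12}\bigr)$, but otherwise the computations and the verification that incident edges of $G$ yield neighboring $F$-edges (hence semi-admissible pairs) are the same as yours.
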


\begin{proof}[Proof of \cref{lem:denseadm}]
Choose $c=\max\left(\frac{12r}p\sqrt{\frac k\eps},\sqrt{\frac d{12}}\right)$. By \cref{lem:semiadm}, there is a subset $F\subseteq E(\HH)$ of size at least $\frac c2n^{5/2}$ with the property that any neighboring edges $xyz,x'yz\in F$ are $(p,\eps,k,r)$-semi-admissible in $\HH$. Let $\HH'\subseteq\HH$ be the 3-uniform hypergraph on $V(\HH)$ with edge set $F$. We use an averaging argument to show that $e(\HH'_{u,u'})\geq dn/4$ for some $u,u'\in V(\HH)$. In this case, $G=\HH'_{u,u'}$ satisfies the desired conditions.

Set $V=V(\HH)$ and randomly choose $(u,u')\in V^2$ uniformly among all pairs of distinct vertices. Given an unordered pair of vertices $vw\in\binom V2$, let $T_{vw}=|\{u''\in V:u''vw\in F\}|$. We have
\begin{align*}
\E[e(\HH_{u,u'})]
&=\sum_{vw\in\binom V2}	\Pr[vw\in E(\HH_{u,u'})]
=\frac 1{n(n-1)}\sum_{vw\in\binom V2}T_{vw}(T_{vw}-1).
\end{align*}
The average of the $T_{vw}$ is
\[
T=\frac 2{n(n-1)}\sum_{vw\in\binom V2}T_{vw}=\frac 2{n(n-1)}\times 3|F|\geq 3c\sqrt n.
\]
Then, by convexity,
\[
\E[e(\HH_{u,u'})]\geq\frac 1{n(n-1)}\times\binom n2T(T-1)\geq\frac 12(3c\sqrt n)(3c\sqrt n-1).
\]
Because $c\geq\max\left(1,\sqrt{\frac d{12}}\right)$ and $n\geq 1$, we have
\[
\E[e(\HH_{u,u'})]\geq\frac 12(3c\sqrt n)(2c\sqrt n)=3c^2n\geq\frac{dn}4.
\]
Thus, there are vertices $u,u'\in V$ such that $G=\HH'_{u,u'}$ has at least $dn/4$ edges. Moreover, for any incident edges $vw,vw'\in E(G)$, the four edges $uvw,uvw',u'vw,u'vw'$ are all in $F$, implying that the pairs $(uvw,uvw')$ and $(u'vw,u'vw')$ are both $(p,\eps,k,r)$-semi-admissible in $\HH$.
\end{proof}

\begin{lemma}\label{lem:findv0}
Fix $p,\eps\in(0,1]$ and fix a positive integer $k$. There is a constant $d=d(p,\eps,k)$ such that the following holds.

Suppose $G$ is a graph with $n$ vertices and at least $dn/4$ edges. Then there is a subgraph $H\subseteq G$ containing two incident edges $v_0v_1,v_0v_3\in E(H)$ which are both $(p,\eps,k)$-admissible in $H$. Moreover, $\deg_H(v_0)\leq d$.
\end{lemma}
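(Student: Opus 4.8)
The plan is to pass to a highly connected subgraph of $G$, build $H$ around a single vertex, and then use that connectivity to force admissibility after random sampling.

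Since $e(G)\ge dn/4$, the average degree of $G$ is at least $d/2$, so by Mader's theorem $G$ contains a $t$-connected subgraph $G_1$ with $t=\lfloor d/8\rfloor$; in particular $\delta(G_1)\ge t$. We will take $d$, hence $t$, large in terms of $p,\eps,k$ at the end. Pick any vertex $v_0\in V(G_1)$ together with two distinct neighbours $v_1,v_3\in N_{G_1}(v_0)$. Let $H$ be obtained from $G_1$ by deleting edges incident to $v_0$ — always keeping $v_0v_1$ and $v_0v_3$ — until $\deg(v_0)$ has dropped to $\min(\deg_{G_1}(v_0),d)\le d$; thus $v_0$ retains $v_1$, $v_3$, and some set $S$ of at most $d-2$ further neighbours. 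Since $H-v_0=G_1-v_0$, the graph $H-v_0$ is still $(t-1)$-connected, and $v_1,v_3$ keep their $G_1$-degrees, which are at least $t$.

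It remains to show that $v_0v_1$ is $(p,\eps,k)$-admissible in $H$; the edge $v_0v_3$ is symmetric. Sample $U\sim_p V(H)$ and let $K$ be the largest connected component of the induced subgraph $H[U\setminus\{v_0,v_1,v_3\}]$. One shows that with probability at least $1-\eps$ the following hold simultaneously: (a) at least $k$ vertices of $S$ lie in $V(K)$; (b) at least $k$ neighbours of $v_1$ (other than $v_0$) lie in $V(K)$; and (c) $K$ is $k$-connected. On this event, choose $k$-sets $A\subseteq V(K)\cap S$ and $B\subseteq V(K)\cap N_H(v_1)$; by Menger's theorem in the $k$-connected graph $K$ there are $k$ vertex-disjoint $A$-$B$ paths, and prepending the edge from $v_0$ and appending the edge to $v_1$ turns them into $k$ internally vertex-disjoint $v_0$-$v_1$ paths, each of length at least $2$ and each contained in $H[U\cup\{v_0,v_1\}]$. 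That is precisely the event $A_{v_0v_1}$ of \cref{dfn:adm-1}, so $v_0v_1$ is admissible. The bounds (a) and (b) are Chernoff estimates, using $|S|\ge d-2$ and $\deg_H(v_1)\ge t$ together with the fact that a sampled vertex of $G_1-v_0-v_1-v_3$ lies in the giant component $K$ with probability bounded below by a positive constant once $tp$ is large; the bound (c) is the place where $t$ must be taken large — one needs that a supercritical random induced subgraph of a sufficiently connected graph has a giant component that is itself $k$-connected with high probability.

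The main obstacle is exactly step (c): guaranteeing that the sampled subgraph stays $k$-fold connected. One cannot dispense with it by fixing in advance a bounded family of internally vertex-disjoint $v_0$-$v_1$ paths in $G_1$ and hoping that $k$ of them survive the sampling — if $G$ has large girth, then apart from the edge $v_0v_1$ itself every such path is long and survives with probability only $p^{\text{length}-1}$, so any bounded family fails, while the constraint $\deg_H(v_0)\le d$ forbids an unbounded one. Thus the argument is forced to work with the random subgraph globally, using the high connectivity (and, if need be, expansion) of $G_1$; this is the delicate point and the reason $d$ must grow with $p,\eps,k$. Were $G$ known to have bounded girth, step (c) could instead be replaced by a routine greedy construction of many short internally vertex-disjoint $v_0$-$v_1$ paths.
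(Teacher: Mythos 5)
Your overall architecture (pass to a highly connected subgraph, prune $v_0$'s degree down to $d$, then argue admissibility directly via percolation) is genuinely different from the paper's, but it has a fatal gap at exactly the step you flag as delicate. Step (c) --- that the largest component $K$ of $H[U\setminus\{v_0,v_1,v_3\}]$ is $k$-connected with probability $\geq 1-\eps$, and the accompanying assertion that a sampled vertex lies in $K$ with probability bounded below --- is false for constant $t$ and constant $p$. Take $G=C_n^t$, the $t$-th power of a long cycle: it is $2t$-connected with $tn$ edges, so it satisfies the hypothesis of \cref{lem:findv0} with $d\approx 4t$, yet a $p$-random induced subgraph splits into $\Theta(n(1-p)^t)$ components wherever $t$ consecutive vertices are all unsampled. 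There is no giant component (the largest has $O(\log n)$ vertices), the probability that a fixed neighbour of $v_0$ lands in the largest component tends to $0$, and the sampled graph is nowhere near $k$-connected. Results guaranteeing that vertex percolation preserves connectivity require connectivity growing with $n$, which is unavailable here since $d$ must be an absolute constant. Note the lemma is still true for $C_n^t$ --- admissibility comes from the $\approx 2t$ common neighbours of $v_0$ and $v_1$, i.e.\ from many length-$2$ paths --- so it is your mechanism, not the statement, that breaks; your closing remark that short paths only help under a bounded-girth hypothesis is therefore misleading, since in this example the short paths are exactly what save the day.

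For comparison, the paper does not reprove admissibility at all: it imports Lemma 3.1 of Kupavskii--Polyanskii--Tomon--Zakharov (\cref{lem:adm}), which says all but $\frac{2k}{p^2\eps}\,|V(G)|$ edges of any graph are $(p,\eps,k)$-admissible. The actual content of \cref{lem:findv0} is then a purely combinatorial tension that your proposal sidesteps rather than resolves: one needs a vertex that is simultaneously of degree $\leq d$ and incident to \emph{two} admissible edges, and deleting edges at $v_0$ to lower its degree (as you do) changes the graph in which admissibility must hold. The paper handles this by induction on $n$: split $V(G)$ into low-degree and high-degree vertices; either enough admissible edges touch the low-degree part (done), or the high-degree part spans a graph of the same edge density on at most half as many vertices (recurse). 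If you want to salvage your approach, you would need either to prove a version of \cref{lem:adm} yourself or to find an admissibility mechanism that survives in the pruned graph $H$; the giant-component route cannot supply it.
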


Proving \cref{lem:findv0} requires another preliminary result from \cite{KPTZ21}, stated below. It asserts that almost all edges of a sufficiently dense graph are admissible.
\begin{lemma}[Lemma 3.1 in \cite{KPTZ21}]
\label{lem:adm}
% this claim is used in both base case and inductive step
Fix $p,\eps\in(0,1]$ and integers $k,n$. If $G$ is a graph on $n$ vertices, all but at most $\frac{2k}{p^2\eps}|V(G)|$ edges of $G$ are $(p,\eps,k)$-admissible.
\end{lemma}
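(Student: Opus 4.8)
The plan is to produce $H$ as a single, well-chosen subgraph of $G$ from which \cref{lem:adm} immediately yields two admissible incident edges, with the degree bound on $v_0$ coming from a global edge count on that subgraph. Throughout, write $\beta := \frac{2k}{p^2\eps}$, so that \cref{lem:adm} says every graph has at most $\beta$ times as many inadmissible edges as vertices. I will fix a constant $\delta = \delta(p,\eps,k)$ with $\delta - 1 > 4\beta$ and set $d := 4\delta$; then $d = d(p,\eps,k)$ as required.

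Given $G$ with $n$ vertices and $e(G) \geq dn/4 = \delta n$ edges, $G$ is not $(\delta-1)$-degenerate (such graphs have at most $(\delta-1)n$ edges), so it has a subgraph of minimum degree at least $\delta$. Taking an arbitrary such subgraph is not enough: it could be extremely dense with every vertex of unbounded degree, leaving no candidate for a low-degree $v_0$. Instead I would take $G_1 \subseteq G$ to be an \emph{edge-minimal} subgraph of minimum degree at least $\delta$ (among minimum-degree-$\geq\delta$ subgraphs, one with the fewest edges, then discard isolated vertices). Edge-minimality forces every edge of $G_1$ to have an endpoint of degree exactly $\delta$: otherwise both endpoints have degree $>\delta$ and the edge could be deleted. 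Hence, writing $Y$ for the set of vertices of $G_1$ of degree $>\delta$, the set $Y$ is independent, so $\sum_{y\in Y}\deg_{G_1}(y)$ counts the edges incident to $Y$, each of which also has its other endpoint in $V(G_1)\setminus Y$; thus $\sum_{y\in Y}\deg_{G_1}(y) \leq \sum_{v\notin Y}\deg_{G_1}(v) = \delta\,|V(G_1)\setminus Y|$, and therefore $2e(G_1) = \sum_v \deg_{G_1}(v) \leq 2\delta\,|V(G_1)|$. Writing $m := |V(G_1)|$, this gives $\delta m/2 \leq e(G_1) \leq \delta m$: the edge-minimal subgraph is simultaneously dense enough and sparse enough.

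Now set $H := G_1$. Since every vertex of degree $>d$ contributes more than $d = 4\delta$ to $\sum_v\deg_{G_1}(v) = 2e(G_1) \leq 2\delta m$, at most $m/2$ vertices have degree $>d$, so the set $L$ of vertices of $G_1$ of degree in $[\delta,d]$ satisfies $|L| > m/2$. By \cref{lem:adm} applied to $G_1$, at most $\beta m$ edges of $G_1$ are not $(p,\eps,k)$-admissible in $G_1$. If every $v \in L$ had at most one admissible incident edge, then $v$ would have at least $\deg_{G_1}(v) - 1 \geq \delta - 1$ inadmissible incident edges; summing over $v\in L$ and noting each inadmissible edge is counted at most twice yields $(\delta-1)|L| \leq 2\beta m$, whence $\delta - 1 < 4\beta$, contradicting the choice of $\delta$. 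So some $v_0 \in L$ has two admissible incident edges $v_0v_1, v_0v_3$ in $H = G_1$, and $\deg_H(v_0) = \deg_{G_1}(v_0) \leq d$ because $v_0 \in L$, which is exactly what is claimed.

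The crux — and the step I expect to be the main obstacle — is the middle paragraph: \cref{lem:adm} only supplies admissible edges in a host with many edges per vertex, yet we simultaneously need a bounded-degree vertex of that same host, and a generic dense subgraph of $G$ need not contain one. Passing to an edge-minimal subgraph of minimum degree $\geq\delta$ is what reconciles these competing requirements, by pinning the edge-to-vertex ratio into the window $[\delta/2,\,\delta]$; once that is in place the two counting arguments are routine.
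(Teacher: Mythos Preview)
Your write-up does not prove the displayed statement: \cref{lem:adm} is quoted from \cite{KPTZ21} and carries no proof in this paper, and in any case you invoke it as a black box. What you have actually proved is \cref{lem:findv0}. Treating that as the intended target, your argument is correct and proceeds by a genuinely different mechanism than the paper's.

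The paper argues by induction on $|V(G)|$. After trimming edges so that $e(G)\approx dn/4$, it splits $V(G)$ into low-degree vertices $V_1$ (degree at most $d$) and high-degree vertices $V_2$, shows $|V_1|>n/2$, and then branches: either enough edges touch $V_1$ that \cref{lem:adm} already forces two admissible edges at some $v_0\in V_1$ (taking $H=G$), or else $G[V_2]$ is dense enough to recurse. You avoid induction entirely by passing once to an \emph{edge-minimal} subgraph $G_1$ of minimum degree at least $\delta$. The key observation --- that edge-minimality forces every edge to have an endpoint of degree exactly $\delta$, whence $e(G_1)\le \delta\,|V(G_1)|$ --- is what replaces the paper's recursive trimming: it pins the edge-to-vertex ratio of $G_1$ into the window $[\delta/2,\delta]$, dense enough for \cref{lem:adm} to leave many admissible edges, yet sparse enough that more than half the vertices have degree at most $d=4\delta$. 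Your route is cleaner (a single host $H$, no case split, no edge-deletion step) at the price of a somewhat larger constant ($d>16\beta+4$ versus the paper's $d=8\beta+4$); the paper's inductive scheme is more pedestrian but makes the low/high-degree dichotomy explicit at every scale.
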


\begin{proof}[Proof of \cref{lem:findv0}]
Set $\al=\frac{2k}{p^2\eps}$ and set $d=4(1+2\al)$.

Let us induct on $n$.
For the base case, suppose $n\leq d$, and take $H=G$. By \cref{lem:adm}, the number of $(p,\eps,k)$-admissible edges in $G$ is at least
\[
e(G)-\al n\geq \left(\frac d4-\al\right)n=(1+\al)n>\frac n2,
\]
so there is some vertex $v_0\in V(G)$ incident to at least two $(p,\eps,k)$-admissible edges $v_0v_1$ and $v_0v_3$. Moreover, $\deg_H v_0<n\leq d$.

Now, suppose that $n>d$. Delete edges so that $\frac{dn}4\leq e(G)\leq \frac{dn}4+1$. Partition $V(G)$ into $V_1=\{v\in V(G):\deg v\leq d\}$ and $V_2=\{v\in V(G):\deg v> d\}$. Observe that 
\[
(d+1)|V_2|\leq\sum_{v\in V_2}\deg v\leq 2e(G)\leq \frac{dn}2 +2<\frac{(d+1)n}2,
\]
where the last inequality holds because $n>d>4$. Thus, $|V_2|<\frac n2$ and $|V_1|>\frac n2$. 

Let $m_1$ be the number of edges of $G$ with at least one endpoint in $V_1$ and let $m_2=e(G)-m_1$ be the number of edges with both endpoints in $V_2$. We have that
\[
m_1+m_2=e(G)\geq \frac d4n=|V_1|+2\al|V_1|+\frac d4|V_2|>|V_1|+\al n+\frac d4|V_2|.
\]
Thus, either $m_1>|V_1|+\al n$ or $m_2>\frac d4|V_2|$.

If $m_1>|V_1|+\al n$, we take $H=G$. By \cref{lem:adm}, at most $\al n$ edges of $G$ are not $(p,\eps,k)$-admissible. Hence, more than $|V_1|$ edges with an endpoint in $V_1$ are $(p,\eps,k)$-admissible. It follows that there is a vertex $v_0\in V_1$ incident to two $(p,\eps,k)$-admissible edges $v_0v_1,v_0v_3\in E(G)$. By definition of $V_1$, we have $\deg v_0\leq d$.

Now, suppose $m_2>\frac d4|V_2|$. In this case $0<|V_2|<\frac n2$, the former because the induced subgraph $G'=G[V_2]$ is nonempty. Thus, we may apply the inductive hypothesis in $G'$. As desired, this yields a subgraph $H\subseteq G'$ containing two $(p,\eps,k)$-admissible incident edges $v_0v_1$ and $v_0v_3$ whose shared endpoint $v_0$ has degree at most $d$ in $H$.
\end{proof}

\section{Proof of \Cref{mainthm}}\label{s:main}

We are now ready to present the proof of \cref{mainthm}, which is restated here as follows.

\begin{theorem}
There is a constant $c$ such that the following holds. If $\HH$ is a 3-uniform hypergraph with at least $c n^{5/2}$ edges, then $\HH$ contains a triangulation of $\RP$.
\end{theorem}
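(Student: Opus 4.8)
The proof combines the four preparatory results. First fix constants: a small $p\in(0,\tfrac12]$, then a small $\eps\in(0,\tfrac12]$ and large integers $k,r$ (large enough that, in particular, the handful of vertices we shall need to exclude fit inside the admissibility budget and the error terms $\eps$, $r\eps$, $(1-2p)^{r-k}$ are tiny), then $d=d(p,\eps,k)$ from \cref{lem:findv0}, and finally $c=c(p,\eps,r,k,d)$ from \cref{lem:denseadm}. Given $\HH$ with at least $cn^{5/2}$ edges, \cref{lem:denseadm} yields $u,u'\in V(\HH)$ and $G\subseteq\HH_{u,u'}$ with at least $dn/4$ edges in which every pair of incident edges $vw,vw'$ gives $(p,\eps,k,r)$-semi-admissible pairs $(uvw,uvw')$ and $(u'vw,u'vw')$ in $\HH$; then \cref{lem:findv0} applied to $G$ yields $H\subseteq G$ and distinct vertices $v_0,v_1,v_3$ with $v_0v_1,v_0v_3\in E(H)$ both $(p,\eps,k)$-admissible in $H$ and $\deg_H(v_0)\le d$. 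Note $u,u'\notin V(H)$ (since $H\subseteq\HH_{u,u'}$) and, since $k\ge 2$, $\deg_H(v_0)\ge3$. These will play the roles of $u,u',v_0,v_1,v_3$ in \cref{lem:makeRP}.

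It remains to produce the rest of the data for \cref{lem:makeRP}: a vertex $v_2\in N_G(v_0)$, cycles $C,C'\subseteq G$ with $v_1v_0v_2$ a subpath of $C$ and $v_0v_3$ an edge of $C'$, and disks $\DD,\DD'\subseteq\HH$ with induced boundaries $v_0v_1uv_3v_0$ and $v_0v_2u'v_3v_0$, such that the five vertex sets of \cref{lem:makeRP} are pairwise disjoint. I would do this probabilistically. Sample random subsets of $V(\HH)$ — one to host the interior of each of $C,C',\DD,\DD'$ — and also pick $v_2$ uniformly at random from $N_H(v_0)\setminus\{v_1,v_3\}$, so that $\Pr[v_2=v]\ge 1/d$ for each admissible value $v$; this is where the bound $\deg_H(v_0)\le d$ is used. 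Then build $C$ by using $(p,\eps,k)$-admissibility of $v_0v_1$ in $H$ to route a $v_0$--$v_1$ path through its sample (avoiding $v_3$) and closing it into a cycle with $v_1v_0v_2$ a subpath; build $C'$ analogously from a $v_0$--$v_3$ path avoiding $v_1,v_2$; and build $\DD,\DD'$ by \cref{lem:makefineprob} from the semi-admissible pairs $(uv_0v_1,uv_0v_3)$ and $(u'v_0v_2,u'v_0v_3)$ inside their samples — each succeeding with probability at least $1-2r\eps-(1-2p)^{r-k+2}$ — after throwing $u,u',v_0,v_1,v_2,v_3$ into the ``$W$'' argument so the disks' interiors miss them. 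When all four builds succeed, one checks that $V(C)\setminus\{v_0,v_1\}$, $V(C')\setminus\{v_0,v_3\}$, $\Vo(\DD)$, $\Vo(\DD')$, $W=\{u,u',v_0,v_1,v_3\}$ are pairwise disjoint (the first four sit in the distinct random samples together with the explicitly excluded vertices, and disjointness from $W$ uses $u,u'\notin V(H)$, that $v_0,v_1,v_3$ are never interior vertices, and that the cycle-paths avoid $v_1,v_3$). Then \cref{lem:makeRP} produces a subgraph of $\HH$ homeomorphic to $\RP$, i.e.\ a triangulation of $\RP$, as required.

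The one genuinely delicate point, and the heart of the argument, is showing the four builds succeed simultaneously with positive probability. The builds of $C$, $C'$, $\DD$ rest on data fixed in advance, so each fails with probability at most one of the small error terms. The trouble is $\DD'$: its boundary involves $v_2$, which is entangled with the build of $C$, and union-bounding over the $\le d$ possible values of $v_2$ is hopeless since $d$ is far larger than $1/\eps$. The plan is instead to keep the four samples independent, so that the probability of all four builds succeeding, conditioned on a fixed value of $v_2$, factors as a product of four positive probabilities; then, because $v_2$ is uniform over a set of size at most $d$, there is a value $v^\ast$ with $\Pr[v_2=v^\ast]\ge 1/d$ for which this conditional product is positive, making the overall success probability at least $\tfrac1d$ times a positive quantity. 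The price of independent samples is that the substructures are no longer automatically vertex-disjoint; reconciling independence with the disjointness demanded by \cref{lem:makeRP} — by carefully controlling which vertices each build may use, and exploiting that admissibility supplies many internally disjoint paths so a few forbidden vertices can always be dodged — is the real work, and completes the proof.
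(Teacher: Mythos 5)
Your overall architecture matches the paper's: \cref{lem:denseadm}, then \cref{lem:findv0}, then random vertex subsets plus a uniformly random $v_2\in N_H(v_0)\setminus\{v_1,v_3\}$, then \cref{lem:makefineprob} for the disks and \cref{lem:makeRP} to assemble $\RP$. But the step you defer as ``the real work'' --- reconciling the independence of the four random samples with the disjointness demanded by \cref{lem:makeRP} --- is precisely the step that fails, and your proposed repair does not close it. If $C$, $C'$, $\DD$, $\DD'$ live in four \emph{independent} samples, those samples overlap on a constant fraction of $V(\HH)$, and the sets you must keep disjoint ($V(C)\setminus\{v_0,v_1\}$, $\Vo(\DD)$, etc.) are themselves random and of unbounded size; ``dodging a few forbidden vertices'' via the $k$ internally disjoint paths supplied by admissibility only lets you avoid a bounded set fixed in advance, not the entire vertex set of another random structure. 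The paper's resolution is the key idea you are missing: it samples a random \emph{partition} $V(\HH)=U_1\cup U_2\cup U_3\cup U_4$ with membership probabilities $p,p,2p,2p$ (and $p=\frac16$, so these sum to $1$). Each $U_i$ then individually has exactly the marginal distribution required by the admissibility definitions, while disjointness of the four hosts --- hence of the four substructures --- is automatic. The price is that the four success events are no longer independent, so one cannot multiply conditional probabilities as you propose; the paper instead union-bounds, obtaining success probability at least $\frac 1{3d}-4r\eps-2\left(\frac 23\right)^{r-5}>0$.

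Once you are forced into a union bound, your ordering of constants also breaks. You fix a single small $\eps$ and a large $r$, and only then set $d=d(p,\eps,k)$; but \cref{lem:findv0} gives $d\approx 16k/(p^2\eps)$, so the success probability $\approx \frac 1{3d}\approx \frac{p^2\eps}{48k}$ coming from the cycle-plus-$v_2$-matching event can never dominate the failure probability $\approx 4r\eps$ from the disk constructions: the $\eps$'s cancel and you would need $4r<p^2/(48k)$, which is impossible for positive integers $r,k$. The paper avoids this circularity with two separate admissibility parameters: a \emph{fixed} $\eps'=\frac 13$ (with $k=2$) in \cref{lem:findv0}, making $d$ an absolute constant, and a separate $\eps$ chosen \emph{after} $d$ and $r$ so that $4r\eps<\frac 1{6d}$. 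The rest of your outline --- the role of $\deg_H(v_0)\le d$, conditioning on $v_2=v_2^*$ for $\DD'$, and the verification of the five disjoint sets --- is consistent with the paper's proof.
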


\begin{proof}
Set $p=\frac 16$ and $\eps'=\frac 13$, and let $d=d(p,\eps',2)$ be the constant from \cref{lem:findv0}. Let $r$ be a large integer and $\eps$ a small constant, to be determined later. Let $c=c(p,\eps,r,7,d)$ be the constant from \cref{lem:denseadm}.

Our goal is to find vertices $u,u',v_0,v_1,v_2,v_3$; cycles $C,C'$; and disks $\DD,\DD'$ satisfying the hypotheses of \cref{lem:makeRP}. We encourage the reader to revisit \cref{fig:makeRP}, which depicts this setup. % for a visual aid

Choose $u,u'\in V(\HH)$ and $G\subseteq\HH_{u,u'}$ according to \cref{lem:denseadm}. That is, $e(G)\geq dn/4$ and, for any incident edges $vw,vw'\in E(G)$, the pairs $(uvw,uvw')$ and $(u'vw,u'vw')$ are both $(p,\eps,7,r)$-semi-admissible. 
Applying \cref{lem:findv0} to $G$, we locate a subgraph $H\subseteq G$ containing two incident $(p,\eps',2)$-admissible edges $v_0v_1,v_0v_3 \in E(H)$ whose shared endpoint $v_0$ satisfies $\deg_H(v_0)\leq d$.
Set $W=\{u,u',v_0,v_1,v_3\}$.

We introduce two independent sources of randomness. Partition $V(\HH)=U_1\cup U_2\cup U_3\cup U_4$, such that each vertex $v\in V(\HH)$ is independently contained in $U_1$ or $U_2$ with probability $p=\frac 16$ each, and in $U_3$ or $U_4$ with probability $2p=\frac 13$ each. Independently, choose $v_2\in N_H(v_0)\setminus\{v_1,v_3\}$ uniformly at random. (This set is nonempty: we have $|N_H(v_0)\setminus\{v_1\}|\geq 2$ because $v_0v_1$ is $(p,\eps',2)$-admissible in $H$.)

The following two claims lower-bound the probabilities that substructures $C,C'$ (\cref{claimC}) and $\DD,\DD'$ (\cref{claimD}) exist as described in \cref{lem:makeRP}.

\begin{claim}\label{claimC}
With probability at least $1/3d$, there are cycles $C,C'$ in $H$ satisfying (i) and (ii).
\begin{enumerate}[label=(\roman*)]
	\item $C$ contains $v_1v_0v_2$ as a subpath. Moreover, $V(C)\setminus\{v_0,v_1\}\subseteq U_1\setminus W$.
	\item $C'$ contains the edge $v_0v_3$. Moreover, $V(C')\setminus\{v_0,v_3\}\subseteq U_2\setminus W$.
\end{enumerate}
\end{claim}

\begin{proof}
	Consider the following two events.
	\begin{enumerate}
		\item[($A_1$)] There are two internally vertex-disjoint paths in $H$ from $v_0$ to $v_1$ through $U_1$.
		\item[($A_2$)] There are two internally vertex-disjoint paths in $H$ from $v_0$ to $v_3$ through $U_2$.
	\end{enumerate}
	Because the edges $v_0v_1$ and $v_0v_3$ are both $(p,\eps',2)$-admissible, events $A_1$ and $A_2$ each hold with probability at least $1-\eps'$, and a union bound yields $\Pr[A_1\cap A_2]\geq 1-2\eps'=\frac 13$.
	
	Fix $U_1$ and $U_2$ such that $A_1$ and $A_2$ hold. By $A_1$, there is a path $P\subseteq H$ from $v_0$ to $v_1$ through $U_1$ which avoids $v_3$. Consider the third event
	\begin{enumerate}
		\item[($A_3$)] The second vertex of $P$ is $v_2$, i.e.\ $v_0v_2\in E(P)$.
	\end{enumerate}
	Recall that $v_2$ is chosen uniformly from $N_H(v_0)\setminus\{v_1,v_3\}$, and is independent of the events $A_1$ and $A_2$. It follows that
	\[
	\Pr[A_1\cap A_2\cap A_3]=\Pr[A_1\cap A_2]\times\frac 1{\deg_H(v_0)-2}
	\geq \frac 13\times\frac 1{d-2}>\frac 1{3d}.
	\]
	
	To finish the proof, we assume $A_1$, $A_2$, and $A_3$ hold simultaneously, and use this to construct $C$ and $C'$. Set $C=P\cup v_0v_1$. By $A_2$, there is a path $P'$ from $v_0$ to $v_3$ through $U_2$ which avoids $v_1$; we set $C'=P'\cup v_0v_3$.
	
	Let us check condition (i). By $A_3$, it is immediate that $v_1v_0v_2$ is a subpath of $C$. By construction, $V(P)\subseteq U_1\setminus\{v_3\}$, yielding that
	\[
		V(C)\setminus\{v_0,v_1\}=V(P)\setminus\{v_0,v_1\}\subseteq U_1\setminus\{v_0,v_1,v_3\}. 
	\]
	Moreover, because $C\subseteq H\subseteq\HH_{u,u'}$, we see that $u,u'\notin V(C)$. Thus,
	\[
		V(C)\setminus\{v_0,v_1\}\subseteq U_1\setminus\{v_0,v_1,v_3,u,u'\}=U_1\setminus W,
	\]
	completing the proof of (i). The proof of (ii) is analogous.
\end{proof}

\begin{claim}\label{claimD}
The following two events simultaneously occur with probability at least $1-4r\eps-2\times\left(\frac 23\right)^{r-5}$.
\begin{enumerate}[label=(\roman*)]
	\item There is $\DD\subseteq\HH$ homeomorphic to $\disk$ with induced boundary $\partial\DD=v_0v_1uv_3v_0$ and interior vertex set $\Vo(\DD)\subseteq U_3\setminus W$.
	\item There is $\DD'\subseteq\HH$ homeomorphic to $\disk$ with induced boundary $\partial\DD'=v_0v_2u'v_3v_0$ and interior vertex set $\Vo(\DD')\subseteq U_4\setminus W$.
\end{enumerate}
\end{claim}

\begin{proof}
	We show that (i) and (ii) each occur with probability at least $1-2r\eps-\left(\frac 23\right)^{r-5}$. Then, a union bound implies the desired result.

	(i) is a direct application of \cref{lem:makefineprob}. Recall that $U_3\sim_{2p}V(\HH)$. The pair $(uv_0v_1,uv_0v_3)$ is $(p,\eps,7,r)$-admissible in $\HH$ because $v_0v_1,v_0v_3\in E(H)\subseteq E(G)$. Applying \cref{lem:makefineprob} shows that $\DD$ exists with probability at least $1-2r\eps-\left(\frac 23\right)^{r-5}$.
	
	The bound for (ii) is analogous, but requires more care because $v_2$ is chosen randomly. Fix any $v^*_2\in N_H(v_0)\setminus\{v_1,v_3\}$. The pair $(u'v_0v^*_2,u'v_0v_3)$ is $(p,\eps,7,r)$-admissible in $\HH$ because $v_0v^*_2,v_0v_3\in E(H)\subseteq E(G)$. Moreover, because $v_2$ and $U_4$ are chosen independently, the distribution of $U_4$ conditioned on the event $v_2=v_2^*$ is still $U_4\sim_{2p}V(\HH)$. Applying \cref{lem:makefineprob} while conditioning on the event $v_2=v_2^*$, we have 
	\[
	\Pr[\exists\DD'\text{ as in (ii)}\mid v_2=v_2^*]\geq 1-2r\eps-\left(\frac 23\right)^{r-5}.
	\]
	Averaging over all possible $v_2^*\in N_H(v_0)\setminus\{v_1,v_3\}$ yields the desired bound:
	\[
	\Pr[\exists\DD'\text{ as in (ii)}]=\sum_{v_2^*}\Pr[v_2=v_2^*]\times\Pr[\exists\DD'\text{ as in (ii)}\mid v_2=v_2^*]\geq 1-2r\eps-\left(\frac 23\right)^{r-5}.\qedhere
	\]
\end{proof}

Combining \cref{claimC,claimD} via a union bound shows that the following four structures exist simultaneously with probability at least $\frac 1{3d}-4r\eps-2\times \left(\frac 23\right)^{r-5}$.
\begin{itemize}
	\item A cycle $C\subseteq\HH_{u,u'}$ containing the subpath $v_1v_0v_2$ with $V(C)\setminus\{v_0,v_1\}\subseteq U_1\setminus W$.
	\item A cycle $C'\subseteq\HH_{u,u'}$ containing the subpath $v_3v_0$ with $V(C)\setminus\{v_0,v_3\}\subseteq U_2\setminus W$.
	\item A subgraph $\DD\subseteq \HH$ homeomorphic to $\disk$ with induced boundary $\partial\DD=v_0v_1uv_3v_0$ and interior vertex set $\Vo(\DD)\subseteq U_3\setminus W$.
	\item A subgraph $\DD'\subseteq \HH$ homeomorphic to $\disk$ with induced boundary $\partial\DD'=v_0v_2u'v_3v_0$ and interior vertex set $\Vo(\DD')\subseteq U_4\setminus W$.
\end{itemize}
Fix $r$ large enough that $2\times\left(\frac 23\right)^{r-5}<\frac 1{6d}$. Choose $\eps$ small enough (in terms of $r$) that $4r\eps<\frac 1{6d}$. It follows that
\[
\frac 1{3d}-4r\eps-2\times \left(\frac 23\right)^{r-5}>0.
\]
Thus, for some choice of the random partition $U_1,\ldots,U_4$ and random vertex $v_2$, there are structures $C,C',\DD,\DD'$ satisfying the four bullet points.

We obtain a triangulation of $\RP$ in $\HH$ by applying \cref{lem:makeRP}. By construction, the vertices $v_0,v_1,v_2,v_3$ are distinct. Moreover the five sets $V(C)\setminus\{v_0,v_1\}$, $V(C')\setminus\{v_0,v_3\}$, $\Vo(\DD)$, $\Vo(\DD')$, and $W$ are disjoint, as they are respectively contained in the five disjoint sets $U_1\setminus W$, $U_2\setminus W$, $U_3\setminus W$, $U_4\setminus W$, and $W$. Thus, \cref{lem:makeRP} yields that $\HH$ contains a triangulation of $\RP$.
\end{proof}

\section{Concluding Remarks}\label{s:conclusion}

Although our upper bound on $\exh(n,\RP)$ and the upper bound on $\exh(n,\tor)$ given in \cite{KPTZ21} are both asymptotically $O(n^{5/2})$, we remark that neither proof naturally generalizes to all surfaces. Indeed, our work is tied to the decomposition of $\RP$ given in \cref{s:RP}, whereas the strategy of \cite{KPTZ21} is inherently limited to orientable surfaces. However, there are some fundamental similarities: both results follow a probabilistic approach, and the $n^{5/2}$ term can be traced back to \cref{lem:semiadm} (which is Lemma 3.3 in \cite{KPTZ21}) in both cases. This suggests that there may be a proof of \cref{genthm} that unifies the two strategies.

\begin{question}\label{q:unify}
Can \cref{genthm} be proven without extensive case-specific arguments? That is, is there a single unified approach that works for all orientable and non-orientable surfaces $X$?
\end{question}

A positive answer to \cref{q:unify} would have further implications. Notice that the quantity $\exh(n,X)$ can be defined for any \emph{homogenous} $(r-1)$-dimensional simplicial complex $X$, i.e.\ any simplicial complex $X=\X(\FF)$ homeomorphic to some $r$-uniform hypergraph $\FF$. When $r=3$, Keevash, Long, Narayanan, and Scott \cite{KLNS21} recently showed that $\exh(n,X)=O(n^{2.8})$ for any homogenous 2-dimensional simplicial complex $X$, and conjectured that the exponent could be brought down to $\frac 52$.

\begin{conjecture}\label{conj:anyX}
	Let $X$ be any homogenous 2-dimensional simplicial complex. Then we have $\exh(n,X)=O(n^{5/2})$.
\end{conjecture}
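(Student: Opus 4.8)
The plan is to follow the two-layer philosophy of this paper — (1) give a topological recipe that reconstructs $X$ from a bounded list of disks glued along short boundary cycles, and (2) locate all of those disks, with pairwise-disjoint interiors, inside a random subset of $V(\HH)$ using the semi-admissibility machinery of \cref{s:adm} — but carried out for an arbitrary homogeneous $2$-complex rather than for $\RP$. Concretely, fix a triangulation $\FF$ with $\X(\FF)\cong X$, let $G_0$ be its $1$-skeleton, and order its triangles $\sigma_1,\dots,\sigma_m$; the target is a subdivided copy of $X$ in $\HH$ obtained by first embedding a subdivision of $G_0$ and then attaching, one triangle at a time, a disk realizing each $\sigma_i$ along its (already embedded, bounded-length) boundary cycle.

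First I would upgrade \cref{lem:denseadm} so that it produces a bounded family of apex vertices $u_1,\dots,u_t$, with $t=t(X)$, together with link graphs on which neighboring edge pairs are semi-admissible; a single pair $u,u'$ suffices for surfaces only because every edge of a surface lies in exactly two triangles, whereas for general $X$ one needs roughly as many apexes as the maximum number of triangles on an edge of $\FF$. Next I would generalize \cref{lem:makefineprob} to the following statement: given the appropriate admissibility hypothesis and a prescribed cycle $\gamma$ of bounded length, with probability $1-o_r(1)$ there is a disk $\DD\subseteq\HH$ with induced boundary $\partial\DD=\gamma$ whose interior vertices lie in a fresh random subset and avoid a prescribed small forbidden set $W$. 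Then I would run the induction: embed $G_0$ using admissibility of its edges, and for $i=1,\dots,m$ attach a disk for $\sigma_i$ along $\partial\sigma_i$, taking $W$ to be all vertices used so far. Since $m=O_X(1)$ and each step fails with probability geometric in $r$, a union bound produces a positive-probability outcome, and a global check — in the spirit of the proof of \cref{lem:makeRP}, using the induced-boundary conditions throughout — shows the resulting union is homeomorphic to $X$.

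The main obstacle is exactly the place where surfaces are special: the disk-finding lemmas here and in \cite{KPTZ21} build ``pyramid'' disks over one or two apex vertices, which matches the local structure of a surface, but a homogeneous $2$-complex can have edges lying in arbitrarily many triangles and vertices with arbitrarily complicated links. Two things must then be controlled: (i) showing that a \emph{bounded} number of apex layers — bounded in terms of $X$ alone — always suffices to realize every triangle of $\FF$ as such a disk; and (ii) maintaining the global induced-boundary and disjointness conditions across all $m$ disks, so that no accidental identifications occur and the union is genuinely a subdivision of $X$ rather than of some quotient of it. Item (i) is essentially a structural question about how $X$ can sit inside a $3$-uniform hypergraph that neither existing proof confronts, and I expect it to be the crux.

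A complementary route, which may sidestep part of this, is to prove a KPTZ-style gluing meta-lemma: if $X$ is obtained by gluing complexes $X_1,X_2$, each with $\exh(n,X_i)=O(n^{5/2})$, along a subcomplex that is a graph of bounded size, then $\exh(n,X)=O(n^{5/2})$. Combined with \cref{genthm} for surfaces and a direct argument for the ``book'' complexes (many triangles sharing one edge), a uniform decomposition of an arbitrary homogeneous $2$-complex into such pieces would then yield the conjecture; in this version the difficulty migrates to producing the decomposition and to the gluing lemma itself, whose proof would again rest on \cref{lem:semiadm} and so retain the $n^{5/2}$ exponent.
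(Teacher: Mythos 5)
This statement is a \emph{conjecture} in the paper, not a theorem: it is attributed to Keevash, Long, Narayanan, and Scott \cite{KLNS21}, reiterated by \kptz\ \cite{KPTZ21}, and recorded here as an open problem. The paper offers no proof of it, so there is nothing to compare your argument against --- and, to your credit, you have not produced one either. What you have written is a research plan, and you are candid that it has holes.

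Let me nonetheless be concrete about why the plan as stated does not close. The central gap is exactly the one you flag as item (i): the disk-finding machinery of \cref{lem:makefineprob} and its relatives in \cite{KPTZ21} is engineered around the fact that every edge of a closed surface lies in exactly two triangles, so two apex vertices $u,u'$ (or, in KPTZ, a bounded chain of apexes around a single handle) capture the entire local structure. For a general homogeneous $2$-complex, an edge can lie in arbitrarily many triangles, and the ``apex layer'' heuristic does not obviously give a bound $t(X)$ on the number of apexes needed, nor is it clear that the link graphs $\HH_{u_i,u_j}$ for different pairs can be made simultaneously dense enough for \cref{lem:semiadm} to apply --- the averaging in \cref{lem:denseadm} only controls a single pair. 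Your item (ii), the global induced-boundary and disjointness bookkeeping across $m$ disks, is more routine in spirit (the proof of \cref{lem:makeRP} shows the pattern), but it does rely on item (i) being resolved first, since the disks you need may be forced to share boundary edges in ways the current lemmas never have to handle. The alternative gluing route you propose also leaves the key step --- a KPTZ-style gluing lemma along an arbitrary bounded subgraph, plus a decomposition of $X$ into surface and book pieces --- unproved, and the former is not a formal consequence of anything in this paper or in \cite{KPTZ21}. In short: your outline identifies the right obstacle, but does not surmount it, and the conjecture remains open.
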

\noindent
This conjecture was reiterated by \kptz \cite{KPTZ21} and previously alluded to by Linial \cite{Li08,Li18}.
An affirmative answer to \cref{q:unify} would mark an important next step towards its proof.

Lastly, we ask about $\exh(n,X)$ in higher dimensions.
\begin{question}
Let $r\geq 4$ and let $X$ be a $(r-1)$-dimensional manifold. What can we say about the asymptotics of $\exh(n,X)$? What if $X$ is an arbitrary homogenous $(r-1)$-dimensional simplicial complex?
\end{question}

\paragraph*{Acknowledgements.} The author is grateful to Nikhil Pandit for suggesting the proof of \cref{prop:makeRP} presented in \cref{ss:RP-strat}. The author is supported by a Fannie and John Hertz Foundation Fellowship and an NSF Graduate Research Fellowship (grant number DGE-1656518).

\end{document}